\documentclass[a4paper,11pt,leqno,english]{smfart}
\usepackage{aeguill}
\usepackage{enumerate}
\usepackage{amssymb,amsmath,latexsym,amsthm}
\usepackage[T1]{fontenc}
\usepackage{smfthm}
\usepackage{geometry}   
\usepackage{url}      
\usepackage[frenchb, english]{babel}
\usepackage[utf8]{inputenc}   
\usepackage{mathrsfs}
\usepackage{relsize}
\usepackage{xcolor}
\usepackage[all]{xy}
\usepackage{comment}
\definecolor{violet}{rgb}{0.0,0.2,0.7}
\definecolor{rouge2}{rgb}{0.8,0.0,0.2}
\usepackage{hyperref}
\usepackage{mathrsfs}
\hypersetup{
    bookmarks=true,         
    unicode=false,          
    pdftoolbar=true,        
    pdfmenubar=true,        
    pdffitwindow=false,     
    pdfstartview={FitH},    
    pdftitle={},    
    pdfauthor={},     
    colorlinks=true,       
   linkcolor=violet,          
    citecolor=violet,        
    filecolor=black,      
    urlcolor=cyan}           
\setcounter{tocdepth}{2}

\newcommand{\X}{X^{\circ}}

\newcommand{\Q}{\mathbb{Q}}

\newcommand{\ep}{\varepsilon}

\newcommand{\la}{\langle}

\newcommand{\ra}{\rangle}

\renewcommand{\ge}{\geqslant}
\renewcommand{\le}{\leqslant}

\newcommand{\Ric}{\mathrm{Ric} \,}
\newcommand{\om}{\omega}

\newcommand{\omke}{\omega_{\rm KE}}
\newcommand{\omkeb}{\omega_{{\rm KE},b}}
\newcommand{\omkebs}{\omega_{{\rm KE},b,s}}

\newcommand{\omt}{\om_{t}}

\newcommand{\Supp}{\mathrm {Supp}}
\newcommand{\omp}{\omega_{\rm P}}

\newcommand{\Db}{\Delta_{b,s}}

\newcommand{\Zsd}{Z^{\circ}}

\newcommand{\tr}{\mathrm{tr}}

\newcommand{\ddc}{dd^c}
\newcommand{\vpb}{\varphi_{b}}
\newcommand{\vpte}{\varphi_{t,\ep}}
\newcommand{\omte}{\omega_{t,\ep}}
\newcommand{\omted}{\omega_{t,\ep,\delta}}

\newcommand{\tom}{\widetilde{\omega}}

\newcommand{\omb}{\omega_{b}}

\newcommand{\omz}{\omega_{Z}}

\renewcommand{\ae}{\alpha_{\ep}}
\newcommand{\be}{\beta_{\ep}}

\def\HSC{\mathrm{HSC}}

\newtheorem*{thma}{Theorem A}
\newtheorem*{thmb}{Theorem B}

\setcounter{tocdepth}{1}
\numberwithin{equation}{section}


\begin{document}

\frontmatter 

\title[Quasi-projective manifolds with negative HSC]{Quasi-projective manifolds with negative holomorphic sectional curvature}

\author{Henri Guenancia}
\address{Institut de Mathématiques de Toulouse; UMR 5219, Université de Toulouse; CNRS, UPS, 118 route de Narbonne, F-31062 Toulouse Cedex 9, France}
\email{henri.guenancia@math.univ-toulouse.fr}
\date{\today}
\thanks{The author is partially supported by the NSF Grant DMS-1510214 and the project PEPS "Jeune chercheuse, jeune chercheur" funded by the CNRS}

\begin{abstract}
Let $(M,\omega)$ be a compact Kähler manifold with negative holomorphic sectional curvature. It was proved by Wu-Yau and Tosatti-Yang that $M$ is necessarily projective and has ample canonical bundle. In this paper, we show that any irreducible subvariety of $M$ is of general type. Moreover, we can extend the theorem to the quasi-negative curvature case building on earlier results of Diverio-Trapani. Finally, we investigate the more general setting of a quasi-projective manifold $X^{\circ}$ endowed with a Kähler metric with negative holomorphic sectional curvature and we prove that such a manifold $X^{\circ}$ is necessarily of log general type.  
\end{abstract}

\maketitle
\bigskip

\section{Introduction}

\subsection{Singular subvarieties}
Let $M$ be a compact Kähler manifold of dimension $n$ and let $\omega$ be a Kähler metric on $M$ such that its holomorphic sectional curvature is negative; that is, for every $x\in M$ and any $[v]\in \mathbb P(T_{M,x})$, one has $\HSC_{\om}(x,[v])<0$. 


Recall that if $(R_{i\bar j k \bar \ell })$ is the curvature tensor of $\om$ in some holomorphic coordinates $(z_i)$ and if $v=\sum v_i \frac{\partial}{\partial z_i}$ is a non-zero tangent vector at $x$, then the holomorphic sectional curvature of $(M,\om)$ at $(x,[v])$ is defined by 
$$\HSC_{\om}(x,[v]):=\frac{1}{|v|^4_{\om}} \cdotp \sum_{i,j,k,\ell } R_{i\bar j k \bar \ell} \, v_i\bar v_jv_k\bar v_{\ell}.$$ 
Under the assumptions on $(M,\om)$ above, it was proved by Wu-Yau \cite{WY16} that $K_M$ is ample provided that $M$ is a projective manifold. Shortly after, Tosatti-Yang \cite{TosYang} extended the result to the general Kähler case. In particular, under those general assumptions, $M$ is automatically projective. Now, if $Y\subset M$ is a \textit{smooth} subvariety of $M$, then the decreasing property of the holomorphic (bi)sectional curvature shows that $K_Y$ is ample again. However, this argument cannot be directly applied to \textit{singular} subvarieties of $M$. The first main result of this paper deals precisely with this question. 

\begin{thma}
Let $(M,\om)$ be a compact Kähler manifold with negative holomorphic sectional curvature and let $Y\subset M$ be a possibly singular, irreducible subvariety of dimension $m$. Then, $Y$ is of general type.  
\end{thma}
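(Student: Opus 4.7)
The plan is to pass to a smooth birational model of $Y$ and then run on it a degenerate version of the Wu--Yau argument. Since $M$ is projective by \cite{TosYang}, so is the closed subvariety $Y$; pick a resolution of singularities $\pi:\wt Y \to Y$ with $\wt Y$ smooth projective and $\pi$ an isomorphism above $Y_{\rm reg}$, and set $\tilde\om := \pi^*(\om|_Y)$. Then $\tilde\om$ is a smooth, closed, semipositive $(1,1)$-form on $\wt Y$, strictly positive on the Zariski open set
\[
U := \pi^{-1}(Y_{\rm reg}) \setminus \exc(\pi),
\]
over which $\pi$ restricts to a biholomorphism onto an open subset of $Y_{\rm reg}$. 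Recall that $Y$ is of general type, by definition, exactly when $K_{\wt Y}$ is big, so this is what we aim to prove.

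Two properties of $\tilde\om$ are crucial. First, $\tilde\om$ is nef and big on $\wt Y$, since
\[
\int_{\wt Y}\tilde\om^m \;=\; \int_{Y_{\rm reg}} \om^m \;>\; 0.
\]
Second, the holomorphic sectional curvature of $\tilde\om$, viewed as a Kähler metric on $U$, is bounded above by a strictly negative constant $-\kappa<0$. Indeed, by Griffiths' curvature-decreasing principle for complex submanifolds applied to $Y_{\rm reg}\subset M$, one has $\HSC_{\om|_{Y_{\rm reg}}}\leq \HSC_{\om}\leq -\kappa$, the last inequality coming from compactness of $M$; and $\pi|_U$ is an isometry between $(U,\tilde\om)$ and an open subset of $(Y_{\rm reg},\om|_{Y_{\rm reg}})$.

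With these two structural pieces in hand, the task reduces to the following degenerate Wu--Yau type statement: \emph{on a compact Kähler manifold $\wt Y$, a smooth nef and big semipositive $(1,1)$-form $\tilde\om$ whose holomorphic sectional curvature is uniformly negative on the locus where it is Kähler forces $K_{\wt Y}$ to be big.} To prove this, I would fix a Kähler form $\eta$ on $\wt Y$, a smooth representative $\chi\in c_1(K_{\wt Y})$, and run a Wu--Yau continuity method in the Kähler class $[\tilde\om+\ep\eta]+t[\chi]$: solve a family of twisted complex Monge--Ampère equations, apply the maximum principle to a quantity of the form
\[
\log \mathrm{tr}_{\om_{t,\ep}}(\tilde\om+\ep\eta) - A\,\vp_{t,\ep}
\]
to get uniform estimates exploiting the curvature bound on $U$, and then send $\ep\to 0$ and $t\to 0$ to extract a positive lower bound on the volume of $K_{\wt Y}+\delta[\tilde\om]$ for all small $\delta>0$. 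Combined with the bigness of $\tilde\om$ itself, this forces $K_{\wt Y}$ to be big.

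The main obstacle is precisely this last step. The classical Wu--Yau argument requires a Kähler reference form, whereas $\tilde\om$ is only semipositive, vanishing along the exceptional locus of $\pi$ and over $Y_{\rm sing}$; the curvature hypothesis is available only on the open set $U$, so the maximum principle has to produce estimates insensitive to what happens on the degeneracy locus, while the error terms generated by the regularization $\ep\eta$ must be controlled uniformly in $\ep$. This is closely parallel to the difficulties overcome by Diverio--Trapani in the quasi-negative case, and their techniques, suitably refined to accommodate the degeneracy of $\tilde\om$ along a subvariety rather than only a vanishing of the curvature, should carry the argument through.
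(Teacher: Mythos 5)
Your preliminary reductions are correct and match the paper's: passing to a resolution, observing that the pullback $\tilde\om$ is a smooth closed semipositive $(1,1)$-form, nef and big, Kähler on a Zariski open set $U$, with holomorphic sectional curvature on $U$ uniformly bounded above by a negative constant via Griffiths' curvature-decreasing principle. The difference---and the gap---lies entirely in what you call the ``degenerate Wu--Yau type statement,'' which you identify correctly as the crux but do not actually prove.

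Two concrete problems. First, your continuity scheme requires $K_{\wt Y}$ to be at least pseudoeffective before you can hope to push $t$ in the direction of $c_1(K_{\wt Y})$ and extract a volume bound; you never address this. The paper establishes it as a genuine first step: since $M$ contains no rational curve (Royden/Ahlfors--Schwarz), $\wt Y$ is not uniruled, and hence $K_{\wt Y}$ is pseudoeffective by \cite{BDPP}. Without this input the class $c_1(K_{\wt Y})+\delta[\tilde\om]$ need not be big for small $\delta$, and the volume argument collapses. Second, and more seriously, your proposed maximum-principle estimate on $\log\tr_{\om_{t,\ep}}(\tilde\om+\ep\eta)-A\vp_{t,\ep}$ is precisely the step that does not survive the degeneracy of $\tilde\om$: a pointwise $C^0$ estimate drawn from a single extremum point cannot be made uniform in $\ep$ when the reference metric vanishes along a divisor and the curvature hypothesis is available only on $U$. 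You acknowledge this obstacle but defer to ``Diverio--Trapani's techniques, suitably refined''; the refinement is exactly what is missing, and it is not a small refinement.

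The paper's way around this is structurally different. Rather than a maximum-principle/continuity-method argument aimed at producing a Kähler--Einstein metric in $c_1(K_{\wt Y})$, it fixes an ample $B$, forms the singular Kähler--Einstein current $\om_{\mathrm{KE},b}$ of the klt big pair $(\wt Y, bB)$ for each rational $b>0$ (via \cite{BEGZ,EGZ,BCHM}), pulls everything back to a resolution $Z$ of the graph of $\wt Y\dashrightarrow X_{\mathrm{can},b}$, regularizes, and then \emph{integrates} Royden's Laplacian inequality over all of $Z$. The integral form of the inequality is insensitive to the pointwise blow-up along the degeneracy locus and the bad terms reduce to $\nu$-exceptional intersection numbers that vanish in the limit. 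This yields the uniform bound $\int\tr_{\om_{\mathrm{KE},b}}\om\cdot\om_{\mathrm{KE},b}^m\le C\,\mathrm{vol}(K_{\wt Y}+bB)$, from which a uniform lower bound on $\mathrm{vol}(K_{\wt Y}+bB)$ follows by a Jensen-type argument, and then $\mathrm{vol}(K_{\wt Y})>0$ by continuity of volume. Your proposal replaces the integral Royden estimate and the BCHM canonical models with a pointwise maximum-principle estimate that is not shown to go through; as written, it is not a proof.
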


Recall that a smooth projective variety $Y$ of dimension $m$ is said to be of general type if there exists a constant $C>0$ such that $$h^0(Y,K_Y^{\otimes k}) \ge C k^m$$ for any $k$ large enough. Moreover, an irreducible projective variety $Y$ is said to be of general type if there exists a resolution $\widetilde Y \to Y^n$ of the normalisation $Y^n$ of $Y$ such that $\widetilde Y$ is of general type. By invariance of the Kodaira dimension of projective manifolds under birational morphisms, this condition is independent of the resolution.\\

The strategy and main ideas behind the proof of the result above are outlined in Section~\ref{proof} below Corollary~\ref{coro}.\\

\noindent
\textit{The quasi-negative curvature case.} Theorem A generalizes to the case of quasi-negative holomorphic sectional curvature, where one needs to use as an important first step a result of Diverio-Trapani \cite{DT16}. We refer to \S~\!\ref{quasin} and Theorem~\ref{thmqn} for a statement and a proof. \\

\noindent
\textit{Log terminal subvarieties. } In the setting of the Theorem A, one can additionally show that if $Y$ has log terminal singularities, then $K_Y$ is an ample $\Q$-line bundle, cf Remark~\ref{lt}.  \\

\subsection{The general quasi-projective case}

Another way to think of the situation of Theorem~A is to view $Y_{\rm reg}$ as a quasi-projective manifold endowed with a Kähler metric $\om$ such that
\begin{enumerate}
\item $\om$ has negative holomorphic sectional curvature;
\item $\om$ extends smoothly to a (singular) compactification.
\end{enumerate}
Given this point of view, it is natural to ask to which extent Theorem~A generalizes to arbitrary quasi-projective manifolds. More precisely, given a projective manifold $X$, a reduced divisor $D$ with simple normal crossings and a Kähler metric $\omega$ on $\X:=X\backslash D$ with negative holomorphic curvature, is it true that $(X,D)$ is of log general type; that is, $K_X+D$ is big?

This question is in part motivated by recent results of Cadorel \cite{Cadorel} who proved that given a projective log smooth pair $(X,D)$ such that $\X$ admits a Kähler metric $\om$ with negative holomorphic sectional curvature and non-positive holomorphic \textit{bisectional} curvature, then $\Omega_X(\log D)$ is big, and, moreover, $\Omega_X$ is big provided that $\om$ is bounded near $D$. 

His proof involves  working on $\mathbb P(\Omega_X(\log D))$ and considering the tautological line bundle $\mathcal O(1)$ on it. By the assumption on the \textit{bisectional} curvature, $\om$ induces a smooth, non-negatively curved hermitian metric $h$ on $\mathcal O(1)$ away from (the inverse image of) $D$. Moreover, the Alhfors-Schwarz lemma guarantees that $h$ extends across $D$ as a singular metric with non-negative curvature. Using a result of Boucksom \cite{Bou02} on a metric characterization of bigness then completes the proof. \\

One cannot expect such a strong result on the logarithmic cotangent bundle if one drops the assumption on the bisectional curvature. However, it seems reasonable to expect it for the logarithmic canonical bundle. The main difficulty is that one does not get from $\om$ a positively curved metric on $K_X+D$ even on a Zariski open set. So one has to produce such a metric out of other methods, like the continuity method, cf \cite{WY16}. However, one faces several new difficulties compared to the setting of Theorem A: 

\begin{enumerate}
\item To start the continuity method, one needs $K_X+D$ to be pseudo-effective. In the case $D=0$, this is a consequence of the absence of rational curves (Ahlfors-Schwarz lemma) combined with Mori's bend and break and \cite{BDPP}. If $D$ is not empty then one only knows that $\X$ has no entire curves hence $X$ has no rational curve meeting $D$ at at most two points. To conclude, one would then need to have a logarithmic version of Mori's bend and break, but unfortunately it is not known as of now, cf Remark~\ref{LBB}. To circumvent the difficulty and inspired by the proof of \cite[Thm.~4.1]{CP}, we modify the boundary $D$ into $D+sB$ for some ample $B$ and some $s>0$ to make $K_X+D+sB$ psef. Only at the very end of the argument, one will see that $K_X+D$ is pseudoeffective. 

\item The finiteness of the log canonical ring, known for klt pairs and crucial to understanding the deforming Kähler-Einstein metrics, is not known for lc pairs like $(X,D)$. The idea is then to deform $(X,D)$ into a klt pair $(X,\Db:=(1-b)D+(b+s)B)$ that makes it klt and of log general type. The price to pay is that we have to carry on an additional error term in the volume estimate (compare Proposition~\ref{intcontrol} and Theorem~\ref{intcontrol2}). 
\end{enumerate}

Give or take these adjustements, one can still run the strategy of Theorem~A \textit{mutatis mutandis}; it will tell us that the volume of $K_X+(1-b)D+(b+s)B$ is bounded away from zero uniformly in $b,s>0$. One of the main points, already present in \cite{Cadorel}, is that the behavior of $\om$ near $D$ is not arbitrary, as $\om$ must be dominated by a metric with Poincaré singularities along $D$ thanks to Ahlfors-Schwarz lemma. However, one needs to look early on at $\om$ on birational models of $(X,D)$ where the Kähler-Einstein metrics are better understood, and $\om$ will pick up singularities along exceptional divisors which will complicate the argument. In the end, the result is the following

\begin{thmb}
Let $(X,D)$ be a pair consisting of a projective manifold $X$ and a reduced divisor $D=\sum_{i\in I} D_i$ with simple normal crossings. Let $\omega$ be a Kähler metric on $\X:=X\backslash D$ such that there exists $\kappa_0>0$ satisfying 
$$\forall (x,v)\in \X \times T_{X,x}\backslash \{0\}, \quad \HSC_{\om}(x,[v])<-\kappa_0.$$
Then, the pair $(X,D)$ is of log general type; that is, $K_X+D$ is big. If additionnally $\om$ is assumed to be bounded near $D$, then $K_X$ is big.
\end{thmb}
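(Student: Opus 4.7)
The plan is to adapt the continuity-method strategy of Theorem~A to the quasi-projective setting, working with the klt pairs $(X,\Db)$ where $\Db=(1-b)D+(b+s)B$ for an auxiliary ample divisor $B$ and small parameters $b,s>0$ which will eventually be let tend to $0$.

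First, the Ahlfors-Schwarz lemma applied to holomorphic discs into $\X$ shows that $\om$ is dominated by a Kähler metric $\omp$ on $\X$ with Poincaré-type singularities along $D$; in particular $\om$ has finite volume, extends as a positive $(1,1)$-current of finite mass on $X$, and $\X$ contains no entire curves. Fixing an ample divisor $B$ in general position with respect to $D$, a bend-and-break argument inspired by the proof of \cite[Thm.~4.1]{CP} yields pseudo-effectivity of $K_X+D+sB$ for every $s>0$; since $(b+s)B$ contributes an ample class, $K_X+\Db$ is big for $b$ chosen sufficiently small relative to $s$. The pair $(X,\Db)$ being klt and of log general type, the existence theorem for singular Kähler-Einstein metrics on klt pairs of log general type (Berman-Boucksom-Eyssidieu-Guenancia-Jonsson-Zeriahi) produces a unique $\omkebs$ on it satisfying $\Ric \omkebs = -\omkebs+[\Db]$ and $\int_X \omkebs^n=\vol(K_X+\Db)$.

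The core of the proof is a Chern-Lu / Yau Schwarz comparison between $\omkebs$ and $\om$. Because $\omkebs$ is most naturally described on a log resolution $\pi:\wX\to X$ of $(X,D\cup B)$, where it has controlled klt-type behaviour along $\pi^{-1}\Supp(\Db)$ and the exceptional divisor $E$, one works on $\wX$; this is delicate because $\pi^*\om$ picks up zeros along $E$. Using $\Ric \omkebs \ge -\omkebs$ away from $\Db$ and $\HSC_\om \le -\kappa_0$, the Chern-Lu formula for the identity map yields an inequality of the form
\[
\Delta_{\omkebs}\log \tr_{\omkebs}\om \;\ge\; -C + \kappa_0\, \tr_{\omkebs}\om,
\]
and a maximum-principle argument with barrier function of the type $\ep(\log|s_E|^2_{h_E}+\log|s_B|^2_{h_B})$, let go to $0$ as $\ep\to 0$, produces a pointwise estimate $\tr_{\omkebs}\om\le C$ independent of $(b,s)$. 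Inserting this into the Monge-Ampère equation for $\omkebs$ and integrating, together with the Ahlfors-Schwarz bound $\int_\X \om^n<\infty$, yields the uniform lower bound
\[
\vol(K_X+\Db) \ge \delta > 0.
\]
Letting $s\to 0$ and then $b\to 0$ and using continuity of the volume function on the big cone gives $\vol(K_X+D)\ge \delta$, so $K_X+D$ is big (and pseudo-effective, confirming the remark made in the excerpt). Under the additional hypothesis that $\om$ is bounded near $D$, the Ahlfors-Schwarz domination improves to $\om\le C\omx$ for some fixed Kähler form $\omx$ on $X$, and running the same argument with $\Db$ replaced by the klt boundary $(b+s)B$ gives $\vol(K_X+(b+s)B)\ge \delta>0$, whence $K_X$ itself is big.

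The main obstacle I anticipate is the uniform Chern-Lu estimate: as $b\to 0$ the klt singularities of $\omkebs$ along $D$ approach Poincaré type and match those of $\om$, so the ratio of volume forms in the Monge-Ampère equation becomes singular in a subtle way. Stabilizing the trace estimate along this degeneration in a parameter-independent manner, while simultaneously handling the exceptional contributions coming from $\pi$ and the additional error term alluded to in the excerpt (compare Proposition~\ref{intcontrol} with Theorem~\ref{intcontrol2}), is the technical heart of the argument, and will require a careful choice of auxiliary barriers together with quantitative control of $\omkebs$ near $D\cup E$ that is uniform in $(b,s)$.
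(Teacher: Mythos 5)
Your overall framework (klt pairs $(X,\Db)$ with $\Db=(1-b)D+(b+s)B$, passage to a birational model, Chern--Lu comparison) matches the paper, but the central estimate you propose is not attainable. You claim a \emph{pointwise} bound $\tr_{\omkebs}\om\le C$ uniform in $(b,s)$, obtained from the Chern--Lu inequality by a maximum principle with vanishing barriers. Near a component of $D$, however, $\omkebs$ has conic singularities of cone angle $2\pi b$ (the boundary coefficient of $D$ is $1-b$), so $\omkebs\sim|z|^{-2(1-b)}|dz|^2$, while Ahlfors--Schwarz gives only $\om\lesssim\omp$ with Poincaré growth $\sim|z|^{-2}\log^{-2}|z|$. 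Hence $\tr_{\omkebs}\om\sim|z|^{-2b}\log^{-2}|z|$, which is unbounded for every $b>0$; barriers of the form $\ep\log|s|^2$ cannot absorb a genuine power singularity of order $|z|^{-2b}$ and then be sent to zero. The paper instead multiplies the trace by the weight $|s_{D'}|^{2b}\prod_j|s_{E_j}|^2$ (which kills the blow-up both along $D'$ and along the $\mu$-exceptional locus where $\mu^*\om$ degenerates), and, crucially, derives only an \emph{integral} estimate by pairing the Chern--Lu inequality with a cut-off and integrating by parts against $\omte^m$ (Lemma~\ref{ipp}); no pointwise maximum principle is used, and the weighting produces the extra error term $b\la(K_X+\Db)^{m-1}\cdot H\ra$ you correctly anticipate.

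Two further gaps. First, even granting an estimate on $\int\tr_{\omkebs}\om\cdot\omkebs^m$, the passage to a uniform lower bound for $\vol(K_X+\Db)$ is not ``insert into the Monge--Amp\`ere equation and use $\int_\X\om^m<\infty$'': the paper extends $\om$ by Skoda--El Mir to a closed positive current on $X$ with big cohomology class $\alpha$ (via \cite{Bou02}), rewrites the left-hand side as the movable intersection $m\la(K_X+\Db)^{m-1}\cdot\alpha\ra$, absorbs the $b$-error term using the bigness of $\alpha$, and then applies the Teissier--Hovanskii inequality $\la(K_X+\Db)^{m-1}\cdot\alpha\ra\ge\la(K_X+\Db)^m\ra^{1-1/m}\la\alpha^m\ra^{1/m}$ to close the loop; finiteness of $\int_\X\om^m$ alone does not give this. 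Second, your assertion that a bend-and-break argument proves pseudoeffectivity of $K_X+D+sB$ for every $s>0$ is not available: the paper records in Remark~\ref{LBB} that logarithmic bend-and-break is an open conjecture. One only knows $K_X+D+s_0B$ is pseudoeffective for some $s_0$ by ampleness of $B$, runs the argument for $s$ above the pseudoeffective threshold, and deduces pseudoeffectivity (hence bigness) of $K_X+D$ only at the very end of Step~5 by a contradiction using continuity of the volume function; the order of limits there is $b\to0$ for fixed $s$, then $s$ descends to the threshold, not $s\to0$ then $b\to0$ as you wrote.
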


\noindent
In particular, Theorem~A is a corollary of Theorem~B. However, we chose to state and prove Theorem A separately in order to better highlight the new ideas that are necessary for Theorem A (and its quasi-negative analogue) and then only later add a layer of technicality to go from Theorem A to the more general Theorem B.

\section*{Acknowledgements}
I would like to thank Simone Diverio for introducing me to this problem and for the many related insightful discussions. I am very much indebted to Sébastien Boucksom for his comments on a preliminary draft of this paper and for suggesting me to consider the quasi-projective case. Finally, I am grateful to Benoît Cadorel for interesting exchanges about this topic.

\section{Proof of Theorem~A}
\label{proof}
Let $(M,\om)$ as in the Theorem, and let $Y\subset M$ be an irreducible subvariety of dimension $m$. As we recalled, $M$ is necessarily projective and does not contain any rational curve; that is, any holomorphic map $\mathbb P^1 \to M$ is constant, cf \cite[Cor.~2]{Royden}. One considers $p:X\to Y$ a resolution of the normalization of $Y$, and the goal is to show that $X$ is of general type using the special Kähler metric $\om|_{Y}$. 

 The first observation is that $X$ cannot be uniruled. Otherwise, so would be $Y$, which is prevented by the non-existence of rational curves on $M$. Therefore, the fundamental result of \cite{BDPP} guarantees that $K_X$ is pseudo-effective. 
 
 The second important observation is that $p^*(\om|_{Y})$ is a smooth closed $(1,1)$-form on $X$ which is positive on a Zariski open set $\Omega$ of $X$. Moreover, there exists $\kappa_0>0$ such that the Kähler metric $(p^*(\om|_{Y}))|_{\Omega} $ has holomorphic sectional curvature bounded above by $-\kappa_0$. This is because the holomorphic sectional curvature of the Kähler metric $\om|_{Y_{\rm reg}}$ admits such a bound by the compactness of $M$ and the decreasing property of the bisectional curvature. These observations lead us to consider the following setting. 
 
 \subsection{Setting}
 \label{setting}
 Let $X$ be a smooth, complex projective variety of dimension $m$. Let $\om$ be a smooth, closed, semipositive $(1,1)$-form on $X$ such that there exists a Zariski open subset $\Omega \subset X$ satisfying: 
\begin{enumerate}
\item The restriction $\om|_{\Omega}$ is a Kähler metric on $\Omega$.
\item There exists $\kappa_0 >0$ such that for any $(x,[v])\in \Omega\times \mathbb P(T_{X,x})$, one has $$\HSC_{\om}(x,[v])\le -\kappa_0.$$
\end{enumerate}
Moreover, let $B$ be a smooth divisor such that $K_X+bB$ is a big $\Q$-divisor for some rational number $b\in [0,1)$. Let $\omkeb$ be the Kähler-Einstein metric associated to the pair $(X,bB)$. That is, $\omkeb$ is a closed, positive current with minimal singularities in $c_1(K_X+bB)$ satisfying the Einstein equation 
$$\Ric \omkeb = -\omkeb +b[B]$$ 
cf \cite{BEGZ}. That current defines a smooth Kähler metric on the Zariski open set $\mathrm{Amp}(K_X+bB)\backslash B$ thanks to the techniques of \textit{loc. cit.} (cf. also \cite{G2}) and the existence of a log canonical model for $(X,bB)$, cf \cite{BCHM}.  \\

\noindent
The following proposition is the crucial estimate needed for the proof of the main Theorem. 
\begin{prop}
\label{intcontrol}
In the setting \ref{setting} above, there exists a constant $C=C(m,\kappa_0)$ independent of $b$ such that 
$$\int_{\mathrm{Amp}(K_X+bB)\backslash B} \tr_{\omkeb}\om \, \cdotp  \omkeb^m\,  \le C \, \mathrm{vol}(K_X+bB).$$
\end{prop}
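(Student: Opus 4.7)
My plan is to establish a pointwise Chern--Lu type inequality for $u:=\log \tr_{\omkeb}\om$ via Royden's trick and integrate it against $\omkeb^m$. On the Zariski open set $U:=\Omega\cap(\mathrm{Amp}(K_X+bB)\setminus B)$, both $\om$ and $\omkeb$ are smooth Kähler metrics; Royden's lemma applied to the identity map $(X,\omkeb)\to(X,\om)$, combined with the bound $\HSC_{\om}\le -\kappa_0$ on $\Omega$ and the Ricci lower bound $\Ric\omkeb\ge -\omkeb$ coming from the Kähler--Einstein equation $\Ric\omkeb=-\omkeb+b[B]$ on the complement of $B$, should produce a differential inequality of the form
\begin{equation*}
\Delta_{\omkeb}\,u \;\ge\; \frac{\kappa_0(m+1)}{2m}\,\tr_{\omkeb}\om \;-\; m.
\end{equation*}
The coefficient in front of $\tr_{\omkeb}\om$ is the usual Royden constant and depends only on $m$ and $\kappa_0$; crucially, it is independent of $b$, and so is the Ricci correction term $-m$.

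Integrating this pointwise inequality against $\omkeb^m$ on $U$, using $\int_U\omkeb^m=\vol(K_X+bB)$ together with the expected non-positivity $\int_U \Delta_{\omkeb}u\cdot\omkeb^m \le 0$ (to be justified), one rearranges to obtain exactly the claimed bound with $C:=\frac{2m^2}{\kappa_0(m+1)}$, a constant depending only on $m$ and $\kappa_0$.

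The main obstacle is the rigorous justification of the integration by parts on the non-compact domain $U$: the current $\omkeb$ is only smooth on $U$, its potentials have minimal singularities along $B$ and $\partial\,\mathrm{Amp}(K_X+bB)$, and $u$ may blow up near the boundary. The standard remedy is to regularize $\omkeb$ by smooth Kähler metrics $\omega_{{\rm KE},b,\ep}$ obtained from a perturbed Monge--Ampère equation on the compact manifold $X$ (for instance solving $\Ric\omega_{{\rm KE},b,\ep}=-\omega_{{\rm KE},b,\ep}+b[B]+\ep\,\omega_X$ for a reference Kähler form $\omega_X$, along the lines of \cite{BEGZ, G2}). On $\omega_{{\rm KE},b,\ep}$ the Royden inequality holds pointwise on $\Omega$, Stokes' theorem integrates the Laplacian to zero on the compact $X$, and the proposed estimate follows with a constant independent of $\ep$. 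One then passes to the limit $\ep \to 0$ using Fatou's lemma, locally smooth convergence $\omega_{{\rm KE},b,\ep}\to\omkeb$ on $U$ from standard pluripotential theory, and continuity of volumes. A secondary technical point is that $\om$ is only semipositive off $\Omega$; this is handled by first running the Royden argument with $\om+\delta\omega_X$ in place of $\om$ and letting $\delta\to 0$ at the end, which does not affect the curvature constant in the interior limit.
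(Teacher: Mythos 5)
Your overall strategy is the right one (Royden's Chern--Lu inequality, integrate against the KE volume form, regularize, pass to the limit), and the Royden constant you write down matches the one used in the paper. However, there are two genuine gaps in the regularization scheme, and both correspond to places where the paper has to do something substantively different.

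First, the proposed family $\omega_{{\rm KE},b,\ep}$ solving $\Ric\omega_{{\rm KE},b,\ep}=-\omega_{{\rm KE},b,\ep}+b[B]+\ep\,\omega_X$ does \emph{not} consist of smooth Kähler metrics on $X$: the cohomology class $c_1(K_X+bB)+\ep\{\omega_X\}$ is big but generally not Kähler for small $\ep$ (the augmented base locus of $K_X+bB$ does not disappear under a small K\"ahler perturbation), and the term $b[B]$ forces cone-type singularities along $B$ in any case. So Stokes on the compact $X$ is not available, and the whole "integrate $\Delta u$ to zero" step collapses. The paper circumvents this by passing to a resolution $Z$ of the graph of the birational map to the canonical model $X_{\rm can}$ (whose existence requires \cite{BCHM}), where $\nu^*(K_{X_{\rm can}}+bB_{\rm can})+t\{\omega_Z\}$ \emph{is} Kähler for $t>0$, and then performing an $\ep$-regularization of the Monge--Ampère density. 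The price of this birational passage is the discrepancy divisor $E_Z$, which contributes an extra term $-\theta^E_\ep$ to the Ricci lower bound of the approximating metrics; the bulk of Steps 2 and 4 of the paper's proof is devoted to tracking this error term and showing that it vanishes in the limit (cohomologically, because $E_Z$ is $\nu$-exceptional, and analytically via conic-metric bounds and dominated convergence). Your proposed Ricci lower bound $\Ric \ge -\omega - m$ with a clean $b$-independent constant simply does not hold for any smooth approximation, and this is precisely where the work is.

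Second, the $\delta$-regularization $\om\mapsto \om+\delta\omega_X$ to handle the degeneracy of $\om$ off $\Omega$ does not preserve the HSC bound. Holomorphic sectional curvature is not monotone under sums of Kähler metrics; even if $\HSC_\om\le-\kappa_0$ on $\Omega$, there is no control on $\HSC_{\om+\delta\omega_X}$, so Royden's inequality fails for the perturbed metric. What the paper does instead is keep $\tom=\mu^*\om$ (degenerate as it is) and run a $\log(u+\delta)$ trick: from the pointwise inequality $\Delta u\ge \kappa u^2+\frac{|\nabla u|^2}{u}-vu$ on $U$ one deduces $\Delta\log(u+\delta)\ge \kappa\frac{u^2}{u+\delta}-v\frac{u}{u+\delta}$, whose both sides extend continuously across $Z\setminus U$ where $u$ vanishes. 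This continuity (plus the compactness of $Z$ and smoothness of $\omte$) is what justifies the Stokes argument, not a perturbation of the source metric $\om$.
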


\noindent
Using the proposition above, Theorem~A follows relatively quickly.

\begin{coro}
\label{coro}
In the setting \ref{setting} above, $X$ is of general type, ie $K_X$ is big.
\end{coro}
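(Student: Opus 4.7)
The plan is to combine Proposition~\ref{intcontrol} with a Khovanskii--Teissier inequality for big cohomology classes in order to extract a lower bound on $\vol(K_X+bB)$ which is uniform in $b$, and then to let $b\to 0^+$.

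Using the pointwise identity $\tr_{\omkeb}\om\cdot\omkeb^m = m\,\om\wedge\omkeb^{m-1}$ (valid wherever both forms are smooth), Proposition~\ref{intcontrol} rewrites as
$$m\int_{\mathrm{Amp}(K_X+bB)\backslash B}\om\wedge\omkeb^{m-1}\;\le\; C\,\vol(K_X+bB).$$
Since $\omkeb$ is a closed positive current with minimal singularities in the big class $\alpha_b := c_1(K_X+bB)$, standard properties of the non-pluripolar product (cf.~\cite{BEGZ}) show that the left-hand side equals the positive intersection number $m\,\langle\alpha_b^{m-1}\rangle\cdot[\om]$: indeed, the non-pluripolar mixed product $\langle\omkeb^{m-1}\rangle\wedge\om$ is concentrated on $\mathrm{Amp}(\alpha_b)$ and annihilates the pluripolar set $B$, along which $\omkeb$ has merely logarithmic singularities coming from the underlying Monge--Ampère equation. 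Since $[\om]$ is nef (as $\om$ is smooth and semipositive), the Khovanskii--Teissier inequality for big classes (see e.g.~\cite{Bou02}) gives
$$\langle\alpha_b^{m-1}\rangle\cdot[\om]\;\ge\;\vol(\alpha_b)^{(m-1)/m}\,V_\om^{1/m},$$
where $V_\om := \int_X\om^m>0$ (positivity comes from $\om$ being Kähler on the non-empty Zariski open set $\Omega$). Combining the two inequalities and simplifying yields the uniform lower bound
$$\vol(K_X+bB)\;\ge\;\left(\frac{m}{C}\right)^{\!m}V_\om.$$

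To conclude, observe that in the situation of Theorem~A, $K_X$ is pseudoeffective by \cite{BDPP} since $X$ is non-uniruled (any rational curve in $X$ would descend to a rational curve in $M$, contradicting the Ahlfors--Schwarz lemma); choosing $B$ ample then guarantees that $K_X+bB$ is big for every $b\in(0,1)$, so that the estimate above applies uniformly along a sequence $b_n\to 0^+$. The continuity of the volume function on $N^1(X)_\R$ then gives $\vol(K_X)\ge (m/C)^m V_\om>0$, hence $K_X$ is big. The main technical point is the cohomological identification in the second paragraph: rigorously matching the smooth integral over $\mathrm{Amp}(K_X+bB)\backslash B$ with the positive intersection $\langle\alpha_b^{m-1}\rangle\cdot[\om]$ requires carefully handling the singularities of $\omkeb$ both off the ample locus and along $B$, but this is a standard feature of the BEGZ theory.
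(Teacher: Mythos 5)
Your argument is correct in outline, but it takes a genuinely different route from the paper's own proof of Corollary~\ref{coro}, and it's worth comparing the two. The paper's proof is deliberately elementary: it writes $M_b := e^{\sup_X\varphi_b}$, uses the $L^1$-compactness of the sup-normalized potentials $u_b$ to get $\mathrm{vol}(K_X+bB)\asymp M_b$, then applies the pointwise AM--GM inequality $\tr_{\omkeb}\om\ge(\om^m/\omkeb^m)^{1/m}$ on a fixed compact $K\Subset\Omega$ away from the degeneracy locus to derive $\int_K\tr_{\omkeb}\om\,\omkeb^m\gtrsim M_b^{1-1/m}$; combined with Proposition~\ref{intcontrol} this forces $M_b\gtrsim M_b^{1-1/m}$, hence a uniform lower bound. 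Your argument replaces all of this with a single appeal to positive intersection theory and the Khovanskii--Teissier inequality for movable products. That is conceptually cleaner and avoids the $M_b$ bookkeeping; and it is in fact very close to what the paper later does in Step~5 of Theorem~\ref{intcontrol2} for the quasi-projective case (except that there the cohomological identification is carried out carefully on the smooth approximations $\om_{t,\ep}$ on a birational model $Z$, rather than asserted directly for $\omkeb$ on $X$).

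Two precision points, neither fatal. First, your claim of \emph{equality} $\int_{\mathrm{Amp}\backslash B}\om\wedge\omkeb^{m-1}=\langle\alpha_b^{m-1}\rangle\cdot[\om]$ is not quite what the BEGZ theory delivers for intermediate-degree products: for closed positive currents $T_1,\dots,T_{m-1}$ with minimal singularities in big classes, one only has the cohomological inequality $\{\langle T_1\wedge\cdots\wedge T_{m-1}\rangle\}\ge\langle\alpha_1\cdots\alpha_{m-1}\rangle$ (equality is only known in top degree, where it reduces to $\mathrm{vol}$). Fortunately this inequality points in exactly the direction your argument uses after pairing with the nef class $[\om]$, so the proof survives; but the statement should be weakened accordingly. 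You also need the compatibility $\langle\alpha_b^{m-1}\rangle\cdot[\om]=\langle\alpha_b^{m-1}\cdot[\om]\rangle$, which does hold because $[\om]$ is nef, but is worth flagging. Second, the parenthetical description of the singularities of $\omkeb$ along $B$ as ``merely logarithmic'' is off: the Monge--Ampère density $|s|^{-2b}dV$ with $b<1$ is $L^p$ for some $p>1$, so the potential $\varphi_b$ is actually bounded (indeed continuous), and $\omkeb$ has zero Lelong numbers; the non-pluripolar product therefore ignores $B$ for the trivial reason that $B$ is pluripolar, not because of a delicate analysis of log singularities.
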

The idea of the proof of the Corollary is to consider an ample divisor $B$ on $X$ and analyze the family of singular Kähler-Einstein metrics $\om_{{\rm KE},b}$ of the pairs of log general type $(X,bB)$ when $b>0$ approaches zero. More precisely, the main point is to show that the volume of these singular metrics does not go to zero when $b\to 0$. The metrics $\omkeb$  are not so well understood directly on $X$, but become much more manageable when seen on the log canonical model $X_{{\rm can},b}$ of the pair $(X,bB)$ whose existence is guaranteed by the fundamental results of \cite{BCHM}. However, these models vary with $b$, hence it is crucial that the estimates be obtained on the fixed manifold $X$, which is the essence of Proposition~\ref{intcontrol}. 

\subsection{Proof of the volume estimate}
This section is devoted to the proof of Proposition~\ref{intcontrol}.
\begin{proof}[Proof of Proposition~\ref{intcontrol}]
By \cite{BCHM}, there exists a canonical model $(X_{\rm can},bB_{\rm can})$ of $(X,B)$ with klt singularities such that $K_{X_{can}}+bB_{\rm can}$ is ample. Let us consider a resolution $Z$ of the graph of the birational map $\phi:X\dashrightarrow X_{\rm can}$ as summarized in the diagram below
 $$
  \xymatrix{
    & \ar[ld]_{\mu} Z  \ar[rd]^{\nu}   &\\
      X  \ar@{-->}[rr]_{\phi} &  &   X_{\rm can} 
  }
  $$
  Then, there exists a $\mathbb Q$-divisor $B_Z=\sum_{i=0}^r b_i B_i$ with snc support, coefficients $b_i\in (0,1)$, with $b_0=b$, $\mu_*B_0=B$ and $B_i$ being $\nu$-exceptional for $i=1, \ldots , r$ such that 
  $$K_Z+B_Z=\nu^* (K_{X_{can}}+bB_{\rm can})+E_Z$$
  for some effective, $\nu$-exceptional $\Q$-divisor $E_Z=\sum_{j=0}^d a_j E_j$. Let us stress here that $\mu$ is an isomorphism over the Zariski open set $\mathrm{Amp}(K_X+bB)$ given that $\phi$ is defined there and induces an isomorphism onto its image when restricted to that set. \\
  
  Let $A:=K_{X_{can}}+bB_{\rm can}$ and let $\om_Z$ be a background Kähler metric on $Z$. For any $t\in [0,1]$, the cohomology class $c_1(\nu^*A+t\{\om_Z\})$ is semi-positive and big (it is even Kähler if $t>0$). Thus,  it follows from \cite{EGZ} that there exists a unique singular Kähler-Einstein metric $\om_t\in c_1(\nu^*A+t\{\om_Z\})$ solving 
  $$\Ric \omt = -\omt + t\om_Z+ [B_Z]-[E_Z]$$
  Moreover, the current $\omt$ has bounded potentials for any $t\in [0,1]$ and there exists an effective, $\mu$-exceptional $\Q$-divisor $F$ on $Z$ such that 
  \begin{equation}
  \label{keb}
  \mu^*\omkeb = \om_0+[F].
  \end{equation} 
  \bigskip

  \noindent
  \textbf{Step 1. Approximate KE metrics on a birational model}
  
  \noindent
  In the following, we will introduce a family of smooth approximations $(\omte)_{\ep>0}$ of $\omt$ defined as follows. Let us choose on $\mathcal O_Z(B_i)$ (resp. $\mathcal O_Z(E_j)$) a holomorphic section $s_i$ (resp. $t_j$) cutting out $B_i$ (resp. $E_j$) and a smooth hermitian metric $h_{B_i}$ (resp. $h_{E_j}$) with Chern curvature $\Theta_{h_{B_i}}$ (resp. $\Theta_{h_{E_j}}$). In order to lighten notation, one sets $|s_i|^2:=h_{B_i}(s_i,s_i)$ (resp. $|t_j|^2:=h_{E_j}(t_j,t_j)$). For any $\ep \in (0,1)$, one defines $\theta_{\ep}^B:=\sum_{i=0}^r b_i (\Theta_{h_{B_i}}+dd^c \log(|s_i|^2+\ep^2))$ and similarly $\theta_{\ep}^E:=\sum_{j=0}^d a_j (\Theta_{h_{E_j}}+dd^c \log(|t_j|^2+\ep^2))$. The smooth $(1,1)$-form $\theta_{\ep}^B$ represents $c_1(B_Z)$ and converges weakly to the current of integration $[B_Z]$ when $\ep\to 0$, and similarly for $\theta_{\ep}^{E}$. Thanks to \cite{Aubin,Yau78}, there exists for any $t,\ep>0$ a unique smooth, Kähler metric $\omte \in c_1(\nu^*A+t\{\om_Z\})$ such that 
  \begin{equation}
  \label{ric}
  \Ric \omte = -\omte + t\om_Z+ \theta_{\ep}^B-\theta_{\ep}^E
  \end{equation}
  In terms of Monge-Ampère equations, this is equivalent to saying that $\omte=\nu^*\om_A+t\om_Z+dd^c \vpte$ solves
  $$(\nu^*\om_A+t\om_Z+dd^c \vpte)^m=\frac{\prod_{j=0}^d (|t_j|^2+\ep^2)^{a_j}}{\prod_{i=0}^d (|s_i|^2+\ep^2)^{b_i}}e^{\vpte} dV$$
  where $\om_A\in c_1(A)$ is a Kähler form on $X_{\rm can}$ and $dV$ is a smooth volume form chosen such that $\Ric dV=-\nu^*\om_A+t\om_Z+\sum b_i \Theta_{h_{B_i}}- \sum a_j \Theta_{h_{E_j}}$. By the proof of \cite[Prop.~1]{GP} and the estimates of \cite[Sect.~4]{GP}, there exists a constant $C_t$ independent of $\ep>0$ such that 
  \begin{equation}
  \label{conic}
  \omte \le C_t\, \om_{B_Z,\ep}
  \end{equation}
where $\om_{B_Z,\ep}$ is an approximate conical metric along $B_Z$, cf. e.g. \cite[Sect.~3]{GP}.\\

  \noindent
  \textbf{Step 2. Bounding the Ricci curvature from below}
  
  \noindent
The heart of the proof relies on the following formula due to Royden, cf \cite[Prop.~9]{WY16}, valid on the Zariski open set $U\subset Z$ defined by $U:=\mu^{-1}(\Omega \cap \mathrm{Amp}(K_X+bB))$, and where $\tom:=\mu^*\om$.
\begin{equation}
\label{Royden}
\Delta_{\omte} \log \tr_{\omte}\tom \ge \kappa \cdotp \tr_{\omte}\tom-\lambda
\end{equation}
where $\kappa:=\frac{n+1}{2n} \cdotp \kappa_0$ and $\lambda: Z\to \mathbb R_+$ is any function such that $\Ric \omte \ge -\lambda \omte$.

The first step is to get an explicit expression for $\lambda$, and then to write a global regularized version of \eqref{Royden} that we could integrate over the whole $Z$. \\

Keeping in mind that we want to get a lower bound of $\Ric \omte$, it is clear from \eqref{ric} that $\theta_{\ep}^B$ and $\theta_{\ep}^E$ will not play the same role. We first deal with the easier term
 \begin{align*}
 \theta_{\ep}^B &= \sum_{i=0}^db_i \left( \frac{\ep^2 }{(|s_i|^2+\ep^2)^2}\cdotp \,\la Ds_i,Ds_i\ra+ \frac{\ep^2}{|s_i|^2+\ep^2} \cdotp \Theta_{h_{B_i}}\right) \\
 & \ge -f_{\ep}^B \, \om_Z
 \end{align*}
 where $f_{\ep}^B:= C \left (\sum_{i=0}^d \frac{\ep^2}{|s_i|^2+\ep^2} \right)$ for some $C>0$ large enough. In particular, one gets 
 \begin{equation}
 \label{B}
  \theta_{\ep}^B \ge -(f_{\ep}^B\, \tr_{\omte}\om_Z) \cdotp \omte
 \end{equation}
 Similarly, one can decompose $$\theta^E_{\ep}= \ae+\be$$ where $\ae \ge 0$ and $\pm \be \le C \left(\sum_j \frac{\ep^2}{|t_j|^2+\ep^2}\right) \cdotp \omz$ for some uniform constant $C>0$.  More precisely, $\ae= \sum_j a_j\frac{\ep^2}{(|t_j|^2+\ep^2)^2} \cdotp \langle Dt_j, Dt_j \rangle$ and $\be= \sum_ja_j \frac{\ep^2 }{|t_j|^2+\ep^2}\cdotp \Theta_{h_{E_j}}. $ If we define $f_{\ep}^E:=C \left(\sum_j \frac{\ep^2}{|t_j|^2+\ep^2}\right) $ for some large $C>0$, then we have 
\begin{align*}
\theta^E_{\ep} &\le \ae+f_{\ep}^E \,\omz\\
& \le \tr_{\omte}(\ae+f_{\ep}^E \,\omz) \, \cdotp \omte \\
& = \tr_{\omte} (\theta^E_{\ep} +(f_{\ep}^E \omz-\be)) \cdotp \omte \\
& \le \tr_{\omte}( \theta^E_{\ep}+ 2f_{\ep}^E \,\omz) \cdotp \omte
\end{align*}
Let us now set $\chi_{\ep}:=f_{\ep}^B+2f_{\ep}^E$ \label{chi}; this is a smooth, positive function bounded uniformly when $\ep\to 0$ and such that $\chi_{\ep}\to 0$ almost everywhere. From \eqref{ric}, \eqref{B} and the inequality above, one deduces that
$$\Ric \omte \ge -\big(1+\tr_{\omte}(\theta_{\ep}^E+\chi_{\ep}\, \om_Z)\big) \cdotp \omte$$
which, along with \eqref{Royden}, yields the following formula valid on $U$
\begin{equation}
 \label{Royden2}
\Delta_{\omte} \log \tr_{\omte}\tom \ge \kappa \cdotp \tr_{\omte}\tom-\tr_{\omte}( \theta_{\ep}^E+\chi_{\ep}\, \omz) -1
\end{equation}
\bigskip

\noindent
\textbf{Step 3. Integration by parts. }

\noindent 
Because $\tom$ might vanish outside of $U$, the left-hand side of \eqref{Royden2} might become singular across $Z\smallsetminus U$. So let us choose $\delta>0$; it is easy to deduce from \eqref{Royden2} the following inequality
\begin{equation}
 \label{Royden3}
 \Delta_{\omte} \log (u+\delta) \ge  \kappa \cdotp \frac{u^2}{u+\delta}-v\cdotp \frac{u}{u+\delta}
 \end{equation}
where $u:=\tr_{\omte}\tom$ and $v=\tr_{\omte}( \theta_{\ep}^E+ \chi_{\ep} \,\omz)+1 $ are smooth, \textit{nonnegative} functions on the whole $Z$ which depend on $t,\ep>0$. Indeed, the inequality \eqref{Royden2} can be rewritten as 
$$\Delta u \ge \kappa u^2+\frac 1 u |\nabla u |^2-vu$$
hence 
\begin{align*}
\Delta \log(u+\delta) & \ge \frac{1}{u+\delta}  \cdotp (\kappa u^2+\frac 1 u |\nabla u |^2-vu) -\frac{1}{(u+\delta)^2} \cdotp |\nabla u |^2 \\
& = \kappa \cdotp \frac{u^2}{u+\delta}-v\cdotp \frac{u}{u+\delta} + \left( \frac{1}{u(u+\delta)}- \frac{1}{(u+\delta)^2} \right) \cdotp |\nabla u |^2 \\
& \ge \kappa \cdotp \frac{u^2}{u+\delta}-v\cdotp \frac{u}{u+\delta} 
\end{align*}
and \eqref{Royden3} follows. \\

As both sides of \eqref{Royden3} are continuous on $Z$ (remember that $t,\ep, \delta>0$ are fixed for the time being), the inequality extends across $Z\setminus U$. Then, one can multiply each side by $\omte^m$ and integrate over $Z$. We get
\begin{equation*}
  \int_Z  \kappa \cdotp \frac{u^2}{u+\delta} \,\omte^m \le \int_Zv\cdotp \frac{u}{u+\delta} \, \omte^m
 \end{equation*}
 By dominated convergence, one can pass to the limit in the integrals when $\delta \to 0$ to get
 \begin{equation}
 \label{Royden4}
  \int_Z  \kappa \cdotp \tr_{\omte}\tom \,\cdotp \omte^m \le \int_Z(\tr_{\omte}( \theta^E_{\ep}+ \chi_{\ep} \,\omz)+1) \, \omte^m
 \end{equation}
 
 \bigskip 
 
\noindent
\textbf{Step 4. Computing the error terms}

\noindent 
Let us now analyze the right-hand side of \eqref{Royden4}, which coincides with
\begin{equation}
\label{rhs}
m\int_{Z}\theta^E_{\ep} \wedge \omte^{m-1}+m\int_Z\chi_{\ep}\, \omz\wedge \omte^{m-1}+\{\nu^*\om_A+t\om_Z\}^m
\end{equation}
The first and last terms of \eqref{rhs} are cohomological. The first term is equal to $$m \, E_Z\cdotp(\nu^*A+t\{\omz\})^{m-1} =mt^{m-1} E_Z\cdotp \{\omz\}^{m-1}$$ as $E_Z$ is $\nu$-exceptional, hence it converges to zero when $t\to 0$. The last one converges to $(A^m)=\mathrm{vol}(K_X+bB)$ when $t\to 0$. 
\smallskip
\noindent
As for the second term, it can be estimated at $t>0$ fixed thanks to \eqref{conic} by the integral 
\begin{equation*}
C_t \int_Z \chi_{\ep} \, \om_{B_Z}^m
\end{equation*}
where $\om_{B_Z}$ is a metric with conical singularities along $B_Z$. In particular, $\om_{B_Z}^m=g \om_Z^m$ for some density $g\in L^1(\om_Z^m)$. As $\chi_{\ep}$ is uniformly bounded and tends to $0$ almost everywhere when $\ep$ approaches $0$, 
the dominated convergence theorem asserts that $$\lim_{\ep\to 0} \int_Z\chi_{\ep}\, \omz\wedge \omte^{m-1} = \lim_{\ep\to 0} \int_Z \chi_{\ep} \, \om_{B_Z}^m=0.$$ 
 In conclusion, one gets
 \begin{equation}
 \limsup_{t\to 0} \limsup_{\ep \to 0}   \int_Z  \kappa \cdotp \tr_{\omte}\tom \,\cdotp \omte^m \le \mathrm{vol}(K_X+bB) 
 \end{equation}

 \bigskip 
 
\noindent
\textbf{Step 5. Conclusion}

\noindent
Let us fix a relatively compact open set $K \Subset \mathrm{Amp}(K_X+bB)\backslash B$. Given \eqref{keb}, we know that on $\mu^{-1}(K)$, $\mu^*\omke$ is the smooth limit of $\omte$ when $t,\ep$ approach zero. Therefore
\begin{align*}
 \int_{K}  \kappa \cdotp \tr_{\omkeb}\om \,\cdotp \omkeb^m  &=  \int_{\mu^{-1}(K)}  \kappa \cdotp \tr_{\mu^*\omke}\tom \,\cdotp (\mu^*\omke)^m \\
&= \limsup_{t\to 0} \limsup_{\ep \to 0}   \int_{\mu^{-1}(K)}   \kappa \cdotp \tr_{\omte}\tom \,\cdotp \omte^m\\
 & \le \limsup_{t\to 0} \limsup_{\ep \to 0}   \int_Z  \kappa \cdotp \tr_{\omte}\tom \,\cdotp \omte^m\\
 &  \le \mathrm{vol}(K_X+bB) 
\end{align*}
and as this holds for any $K$, we get the desired inequality. Proposition~\ref{intcontrol} is proved.
\end{proof}

\subsection{End of the proof}
This section is devoted to the proof of Corollary~\ref{coro}.

\begin{proof}[Proof of Corollary~\ref{coro}]
\label{pcoro}
We first claim that $K_X$ is pseudoeffective. Indeed, observe that if $f:\mathbb P^1\to X$ is a rational curve whose image hits $\Omega$, then there exists a finite set $\Sigma \subset \mathbb P^1$ such that $f(\mathbb P^1\backslash \Sigma) \subseteq \Omega$. Then one can apply the inequality \cite[Prop.~4]{Royden} to $f:(\mathbb P^1\backslash \Sigma, \om_{\rm FS}) \to (\Omega, \om)$ to get 
$$\Delta_{\om_{\rm FS}} \log \tr_{\om_{\rm FS}}(f^*\om) \ge \kappa \,  \tr_{\om_{\rm FS}}(f^*\om)+2$$
where $\kappa:=\frac{m+1}{m}\cdotp \kappa_0$. In particular, the function $ \log \tr_{\om_{\rm FS}}(f^*\om) $ on $\mathbb P^1\backslash \Sigma$ is subharmonic and bounded above. Therefore it extends to a subharmonic function on $\mathbb P^1$, hence it has to be constant which is a contradiction. This shows that every rational curve on $X$ is contained in the Zariski closed proper subset $X\backslash \Omega$, hence $K_X$ is pseudoeffective by \cite{BDPP}. Note that we only used the boundedness from above of $\tr_{\om_{\rm FS}}(f^*\om) $ near the complement of $\Omega$ and not its smoothness across $X\backslash \Omega$. This will be useful later, cf Step~6 on page \pageref{Step6}. \\

Let $B$ be an ample divisor on $X$. For any rational number $b>0$, the $\Q$-line bundle $K_X+bB$ is big, hence there exists a unique Kähler-Einstein metric $\omb\in c_1(K_X+bB)$ on $X$ solving 
$$\Ric \omb = -\omb +b[B]$$
cf \cite{BEGZ} or \cite[Thm.~2.2]{G2}. In terms of Monge-Ampère equation, if $\theta$ (resp. $\theta_B$) is a smooth representative of $c_1(K_X)$ (resp. $c_1(B)$), then $\omb=\theta+b\theta_B+\ddc \vpb$ solves 
$$\la(\theta+b\theta_B+\ddc \vpb)^m\ra=\frac{e^{\vpb}}{|s|^{2b}}\,dV$$
where $dV$ is a fixed smooth volume form such that $\Ric dV=-\theta$, $s$ is a section of $\mathcal O_X(B)$ cutting out $B$ and $|\cdotp|$ is a smooth hermitian metric on $\mathcal O_X(B)$ whose curvature is equal to $\theta_B$. Thanks to \textit{loc. cit.}, $\omb$ has full mass; that is  $$\int_{X}\la \omb^m\ra = \mathrm{vol}(K_X+bB)$$
and, moreover, $\omb$ is a genuine smooth Kähler-Einstein metric on the Zariski open set $\Omega_b:=\mathrm{Amp}(K_X+bB) \backslash B$. Combining this with the content of the Proposition, one gets a uniform constant $C>0$ such that the following inequality holds
\begin{equation}
\label{intt}
\int_{\Omega_b} \tr_{\omb} \om \, \cdotp \omb^m \le C\, \mathrm{vol}(K_X+bB)
\end{equation}
Les us define $M_b:=e^{\sup_X \vpb}$ and $u_b:=\vpb- \sup_X \vpb$ so that $(u_b)_{b>0}$ is a family of sup-normalized $C\om_X$-psh functions for some $C>0$ large enough, independent of $b$. In particular, $(u_b)_{b>0}$ is relatively compact in $L^1(dV)$, hence by the dominated convergence theorem, there exists $C>0$ independent of $b\in (0,1/2)$ such that
$$C^{-1} \le \int_X \frac{e^{u_b}}{|s|^{2b}}dV \le C$$
hence
\begin{equation}
\label{vol}
\mathrm{vol}(K_X+bB) = M_b \int_X \frac{e^{u_b}}{|s|^{2b}}dV \in [C^{-1} M_b, CM_b] 
\end{equation}
In particular, \eqref{intt} allows us to conclude that
\begin{equation}
\label{inttt}
\int_{\Omega_b} \tr_{\omb} \om \, \cdotp \omb^m \le CM_b
\end{equation}
On $\Omega_b$, one has the following standard inequality
\begin{align*}
 \tr_{\omb}\om  &\ge \left(\frac{\om^m}{\omb^m}\right)^{1/m} \\
 & = (\om^m/dV)^{1/m} (M_be^{u_b}/|s|^{2b})^{-1/m} 
\end{align*}
Now let $K\Subset \Omega$ be a relatively compact open subset which is located away from the degeneracy locus of $\om$ so that $(\om^m/dV)^{1/m} \ge C^{-1}>0$ on $K$, up to taking $C$ larger. Then, one has
\begin{align*}
 \int_{K\cap \Omega_b }\tr_{\omb}\om \, \omb^m   & \ge C^{-1} M_b^{1-1/m} \, \int_{K \cap \Omega_b} e^{(1-\frac 1m) u_b} |s|^{2b\left(\frac 1m -1\right)} dV \\
  & = C^{-1} M_b^{1-1/m} \, \int_{K} e^{(1-\frac 1m) u_b} |s|^{2b\left(\frac 1m -1\right)} dV \\
 & \ge {C'}^{-1} M_b^{1-1/m}
 \end{align*}
for some $C'>0$ independent of $b$ as $K\backslash (K\cap \Omega_b)$ has zero Lebesgue measure. Combined with \eqref{inttt} one gets that 
$$M_b \ge C^{-1}M_b^{1-1/m}$$
for some uniform $C>0$. In particular, $M_b$ is uniformly bounded from below away from zero, hence one deduces from \eqref{vol} the existence of $\eta>0$ independent of $b>0$ such that 
$$\mathrm{vol}(K_X+bB)>\eta.$$
By the continuity of the volume function, cf \cite[Thm.~2.2.37]{PAG1}, one deduces that $\mathrm{vol}(K_X)>0$, hence $K_X$ is big and $X$ is of general type.
\end{proof}

Let us finish this section with the following

\begin{rema}
 \label{lt}
 In the setting of the Theorem A, one can additionally see that if $Y$ has log terminal singularities (see e.g. \cite[Def.~2.34]{KM} for a definition), then $K_Y$ is an ample $\Q$-line bundle. 

\noindent
To see this, first observe that $K_Y$ is a big $\Q$-line bundle because a desingularization $\widetilde Y $ of $Y$ is of general type and there is a natural inclusion $H^0(\widetilde Y, mK_{\widetilde Y}) \subseteq H^0(Y,mK_Y)$ for any integer $m$ divisible enough. By \cite[Thm A.(ii)]{BBP}, the augmented base locus of $K_Y$ is uniruled, hence empty, as $M$ does not contain any rational curve. 
\end{rema}

\section{The quasi-negative case}
\label{quasin}
The argument in the proof of Theorem~A is relatively robust and allows us to work with a weaker assumption on the holomorphic section curvature of $(M,\om)$. More precisely, let us consider a compact Kähler manifold $(M,\om)$ with \textit{quasi-negative} holomorphic sectional curvature; that is
\begin{enumerate}
\item[$(i)$] For any pair $(x,[v])\in M \times \mathbb P(T_{M,x})$, one has $\HSC_{\om}(x,[v]) \le 0$.
\item[$(ii)$] There exists $x_0\in M$ such that for any $[v] \in \mathbb P(T_{M,x_0})$, one has $\HSC_{\om}(x_0,[v]) <0$.
\end{enumerate} 
In this setting, Diverio-Trapani \cite{DT16} proved that the conclusions of \cite{WY16, TosYang} hold as well, namely $M$ is projective and $K_M$ is ample. Introducing the (open) negative curvature locus $$\mathcal W:=\{x\in M;  \forall v \in T_{M,x}\backslash \{0\}, \HSC_{\om}(x,[v]) <0\}$$
one can use again the decreasing property of the holomorphic bisectional curvature to conclude that any \textit{smooth} subvariety $Y\subset M$ such that $Y \cap \mathcal W \neq \emptyset$ satisfies that $K_Y$ is ample.  The goal of this section is to extend this result to singular subvarieties:

\begin{theo}
\label{thmqn}
Let $(M,\om)$ be a compact Kähler manifold with quasi-negative holomorphic sectional curvature. Let $Y$ be a possibly singular irreducible subvariety $Y\subset M$ such that $Y \cap \mathcal W \neq \emptyset$. Then $Y$ is of general type.  
\end{theo}

The proof of Theorem~\ref{thmqn} is very much similar to the proof of Theorem~A. Considering a resolution of the normalization of $Y$, one gets a smooth projective manifold $X$ which is not uniruled as $M$ contains no rational curve. Again, using \cite{BDPP}, $K_X$ is pseudo-effective. Then one considers an ample line bundle $B$ on $X$ and a rational number $b>0$ so that $K_X+bB$ is big, hence there is a unique KE metric $\omb \in c_1(K_X+bB)$. The pull-back of the Kähler metric on $X$ will still be denoted $\om$, as in the case of Theorem~A. Let us point out the main adjustments that need to be performed in the quasi-negative case. \\

\noindent
\textbf{Step 1.}

\noindent
There is no change to be made here, as we consider the same metrics $\omte$ on $Z$. \\

\noindent
\textbf{Step 2.}

\noindent
On the Zariski open set $U:=\mu^{-1}(\Omega \cap \mathrm{Amp}(K_X+bB))\subset Z$, the Laplacian inequality now becomes
\begin{equation}
\label{Rroyden}
\Delta_{\omte} \log \tr_{\omte}\tom \ge \kappa \cdotp \tr_{\omte}\tom-\lambda
\end{equation}
where $\kappa:U\to\mathbb R_+$ is a function such that $- \frac{n+1}{2n} \cdotp \kappa(z)$ is a nonpositive upper bound for the holomorphic sectional curvature of $\tom_z$ and $\lambda: Z\to \mathbb R_+$ is a function such that $\Ric \omte \ge -\lambda \omte$, as before. 

\noindent 
The continuous function $\kappa:U\to \mathbb R_+$ does not necessarily extend to a continuous function on $Z$. However it easy to construct a continuous function $\widetilde \kappa:Z\to \mathbb R_+$ along with two small neighborhoods $W\subset W'$ of $Z\smallsetminus U$ with the following properties
\begin{enumerate}
\item[$\cdotp$] $\widetilde \kappa|_{W}\equiv 0$
\item[$\cdotp$] $\widetilde \kappa =  \kappa$ on $U\smallsetminus W'$
\item[$\cdotp$] $\widetilde \kappa \le \kappa$ on $U$
\item[$\cdotp$] $(U\smallsetminus W') \cap (p\circ \mu)^{-1}(Y \cap \mathcal W) \neq \emptyset$
\end{enumerate}
Because of the third point, the formula \eqref{Royden3} remains true if one replaces $\kappa$ by $\widetilde \kappa$. \\

\noindent
\textbf{Steps 3-5.}

\noindent
No change is needed here. The conclusion we get is

\begin{equation}
\label{qn}
\int_{\mathrm{Amp}(K_X+bB)\backslash B} \widetilde \kappa \cdotp \tr_{\omb}\om \, \cdotp  \omb^m\,  \le \, \mathrm{vol}(K_X+bB).
\end{equation}

Moving on to the last part of the proof, one can pick a relatively compact subset $K \Subset \Omega \cap  p^{-1}(\mathcal W) \subset X$ such that on $K$, one has $\widetilde \kappa \ge C^{-1}$ and $(\om^m/dV)^{1/m} \ge C^{-1}$. Therefore
\begin{align*}
\int_{K \cap \Omega_b} \tr_{\omb} \om \, \cdotp \omb^m &\le C\int_{K \cap \Omega_b}\widetilde \kappa \cdotp \tr_{\omb} \om \, \cdotp \omb^m \\
& \le  C\, \mathrm{vol}(K_X+bB)
\end{align*}
At this point, the same arguments as before show that $\mathrm{vol}(K_X+bB) \in [C^{-1} M_b, CM_b] $ as well as $$ \int_{K\cap \Omega_b }\tr_{\omb}\om \, \omb^m   \ge {C}^{-1} M_b^{1-1/m}$$
from which the uniform positive lower bound on $\mathrm{vol}(K_X+bB)$ follows.

\section{The quasi-projective case}
\subsection{Setting}
 \label{setting2}
 Let $X$ be a smooth, complex projective variety of dimension $m$ and let $D=\sum_{k=0}^p D_i$ be a reduced divisor with simple normal crossings. Let $\X:=X\backslash D$ and let $\om$ be a Kähler form on $\X$ such that there exists $\kappa_0 >0$ such that  $$\forall \, (x,[v])\in \X\times \mathbb P(T_{X,x}), \quad \HSC_{\om}(x,[v])\le -\kappa_0.$$
 
 \noindent
 In this setting, one can deduce from the Ahlfors-Schwarz lemma the following 
  \begin{lemm}
 \label{AS}
 In the setting \ref{setting2} above, the following statements hold
 \begin{enumerate}
 \item Every holomorphic map $f:\mathbb C \to \X$ is constant. 
 \item The Kähler metric $\om$ is dominated by a Kähler metric $\omega_{\rm P}$ on $\X$ with Poincaré singularities along $D$. 
 \end{enumerate}
 \end{lemm}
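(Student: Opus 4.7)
Both parts rest on the Ahlfors-Schwarz lemma applied to $(\X, \om)$: as $\HSC_\om \le -\kappa_0 < 0$, there exists a constant $C = C(m, \kappa_0)$ such that every holomorphic map $\varphi: \mathbb{D}_r \to \X$ satisfies
$$\varphi^*\om \le C \, \omega_{\mathbb{D}_r}^{\rm Poinc},$$
where $\omega_{\mathbb{D}_r}^{\rm Poinc}$ denotes the Poincaré metric on the disk of radius $r$, and $C$ is independent of $\varphi$ and $r$.

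To prove $(1)$, let $f: \mathbb{C} \to \X$ be holomorphic and fix $z_0 \in \mathbb{C}$. Applying the above bound to each restriction $f|_{\mathbb{D}_R}$ for $R > |z_0|$ gives $f^*\om(z_0) \le C \, \omega_{\mathbb{D}_R}^{\rm Poinc}(z_0)$; as $R \to +\infty$ the right-hand side tends to zero, while the left-hand side does not depend on $R$, so $f^*\om$ vanishes identically on $\mathbb{C}$. Since $\om$ is a genuine Kähler metric on $\X$, this forces $df \equiv 0$, hence $f$ is constant.

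For $(2)$, the plan is to establish a local domination of $\om$ by a model Poincaré metric near $D$, and then to glue. Around each point of $D$ pick a coordinate chart $U \cong \mathbb{D}^m$ on $X$ in which $D \cap U = \{z_1 \cdots z_k = 0\}$, and set $V := (\mathbb{D}^*)^k \times \mathbb{D}^{m-k} \subset \X$ with its product Poincaré-type metric $\omega_{\rm mod}$. The Kobayashi-Royden infinitesimal pseudometric $k_V$ of the punctured polydisk is comparable to $\omega_{\rm mod}$: each factor $\mathbb{D}^*$ is uniformized by $\mathbb{H}$, so $k_{\mathbb{D}^*}$ coincides with the descended Poincaré length, and the Kobayashi-Royden pseudometric of a product of complete hyperbolic factors is the maximum of the factor pseudometrics (hence comparable to the square root of the sum). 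Fixing $x \in V$ and $v \in T_{X,x}$, any holomorphic disk $\varphi: \mathbb{D} \to V \hookrightarrow \X$ with $\varphi(0) = x$ and $\varphi'(0) = \lambda v$ yields, via the Ahlfors-Schwarz bound, $|v|_\om \le \sqrt{C}/|\lambda|$; taking the infimum over such $\varphi$ and using the above comparison gives $|v|_\om^2 \le C' |v|^2_{\omega_{\rm mod}}$, i.e.\ $\om \le C' \omega_{\rm mod}$ on $V$.

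To finish, construct a global Kähler metric of Poincaré type on $\X$ via the Carlson-Griffiths/Tian-Yau formula
$$\omega_{\rm P} := A \, \omega_X - \sum_i \ddc \log \log^2 |s_i|^{-2},$$
where $\omega_X$ is a Kähler metric on $X$, $s_i$ is a defining section of $D_i$ endowed with a smooth hermitian norm such that $|s_i| < 1$ on $X$, and $A \gg 1$ is large enough to ensure positivity. Covering a neighborhood of $D$ in $X$ by finitely many charts as above (by compactness of $D$), the metric $\omega_{\rm P}$ is uniformly quasi-isometric in each chart to the corresponding model $\omega_{\rm mod}$, so the local estimate above integrates into $\om \le C'' \omega_{\rm P}$ on a neighborhood of $D$. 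Away from $D$, $\om$ is smooth on a relatively compact subset of $\X$ and is therefore dominated by $\omega_{\rm P}$ up to a constant; rescaling $\omega_{\rm P}$ yields the global inequality. The only nontrivial ingredient is the classical identification of the Kobayashi pseudometric of the punctured polydisk with its standard Poincaré-type metric; the rest is routine.
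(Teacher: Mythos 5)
Your proof is correct, but for part (2) it takes a genuinely different route from the paper's. The paper dispatches part (1) by citing Royden's Corollary~1 (which encodes exactly the rescaling argument you spell out) and, for part (2), applies Royden's higher-dimensional Schwarz lemma (his Theorem~1) directly to the identity map $\mathrm{id}:((\Delta^*)^r\times\Delta^{m-r},\omega_{\rm mod})\to((\Delta^*)^r\times\Delta^{m-r},\omega)$: since $\omega_{\rm mod}$ has Ricci curvature bounded below and $\omega$ has holomorphic sectional curvature bounded above by $-\kappa_0$, Royden's theorem gives $\omega\le C\,\omega_{\rm mod}$ in one stroke, and the global statement follows by covering $D$. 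You instead stay one-dimensional: you apply the Ahlfors-Schwarz lemma to extremal holomorphic disks into the punctured polydisk and invoke the classical identification of the Kobayashi-Royden infinitesimal metric of $(\mathbb{D}^*)^k\times\mathbb{D}^{m-k}$ (product formula, uniformization of $\mathbb{D}^*$) with the model Poincaré metric, then glue via the Carlson-Griffiths metric. The paper's route is shorter but requires the reader to accept the higher-dimensional Yau-Royden Schwarz lemma with its completeness hypothesis on the source (one must work with the complete Poincaré metric of the punctured polydisk, or pass to the universal cover, for the hypotheses of Royden's Theorem~1 to hold); your route trades that for the explicit computation of the Kobayashi metric of the punctured polydisk, which is elementary but worth flagging as the one nontrivial input. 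Both are standard and the conclusions agree; the two arguments are in fact closely related, since Royden's Theorem~1 is proved by exactly this kind of disk-extremization, but yours makes the mechanism explicit where the paper simply cites it.
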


 \noindent
 Recall that a Kähler metric $\omp$ on $\X$ is said to have Poincaré singularities along $D$ if for any $x\in D$ and any coordinate chart $U\simeq \Delta^m$ around $x$ where $D$ is given by $(z_1\cdots z_r=0)$, $\om|_{|U}$ is quasi-isometric to the model Poincaré metric 
 $$\omega_{\rm mod} := \sum_{k=1}^r \frac{i\,dz_k\wedge d\bar z_k}{|z_k|^2 \log^2 |z_k|^2}+\sum_{k=r+1}^m i\, dz_k\wedge d\bar z_k$$
 
 \begin{proof}[Proof of Lemma~\ref{AS}]
 The first item is a consequence of \cite[Cor.~1]{Royden}. The second one is a consequence of \cite[Thm.~1]{Royden} applied to $f=\mathrm{id}:((\Delta^*)^r\times \Delta^{m-r}, \omega_{\rm mod}) \to ((\Delta^*)^r\times \Delta^{m-r}, \omega)$ where one identifies $U\cap \X$ with $(\Delta^*)^r\times \Delta^{m-r}$. 
 \end{proof}
 
 \begin{rema}
 At this point, one would like to conclude that $K_X+D$ is pseudoeffective. Indeed, if $K_X+D$ were to fail to be pseudo-effective, then by \cite{BDPP}, one would obtain a covering family of curves $(C_t)$ such that $(K_X+D)\cdotp C_t <0$. Following Mori's bend and break, one could deform each curve $C_t$ into a new reducible curve containing a rational curve $C_t'$ passing through a given point. From the second item of Lemma~\ref{AS}, one would obtain a contradiction if one knew that $C_t'$ intersects $D$ in a most two points. Therefore, the pseudoeffectiveness of $K_X+D$ would be a consequence of the following general conjecture of Keel-McKernan
 
  \begin{conj}
  \label{LBB}
   \emph{ \textbf{(Logarithmic bend and break, cf {\cite[1.11]{KeelMcK}})} }
  
  \noindent
Let $(X,D)$ be a pair consisting of a smooth projective complex variety $X$ and a reduced divisor $D$ with simple normal crossings.

\noindent
 If $C\subset X$ is a curve such that $(K_X+D)\cdotp C<0$ and $C\nsubseteq D$, then through a general point of $C$ there is a rational curve meeting $D$ at most once. 
 \end{conj}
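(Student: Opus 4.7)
The natural approach is to mimic Mori's original bend-and-break in the logarithmic setting. I would let $\nu:\tilde C \to C\subset X$ be the normalization and $\tilde D := \nu^{-1}(D)$ the associated reduced divisor on $\tilde C$ (well defined since $C\nsubseteq D$), and then study the scheme of morphisms of pairs $f:(\tilde C,\tilde D)\to (X,D)$ — that is, maps $\tilde C \to X$ sending $\tilde D$ scheme-theoretically into $D$ — with a general point $p\in\tilde C$ pinned to a prescribed image in $X$. Such deformations are governed by the logarithmic tangent bundle $T_X(-\log D)$, whose first Chern class is $-(K_X+D)$; Riemann-Roch on $\tilde C$ gives an expected dimension at $\nu$ of at least
\[
-(K_X+D)\cdot C \;-\; \dim X \cdot g(\tilde C).
\]
The hypothesis $(K_X+D)\cdot C<0$ produces the leading positive term, and by reducing modulo a large prime and pre-composing with a high power of Frobenius, one can boost $-(K_X+D)\cdot C$ arbitrarily while $g(\tilde C)$ stays bounded — the standard Mori trick — to force a genuine one-parameter deformation of $\nu$ fixing $f(p)$ and preserving incidence with $D$.

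I would then compactify the one-parameter family in a moduli space of stable log maps to $(X,D)$ with marked points indexed by $\tilde D$, and invoke properness: the family must acquire a degenerate limit, a tree of curves glued at nodes, one of whose components passes through the prescribed image of $p$. The classical bend-and-break mechanism then produces a rational component $R$ in that tree. The content of the conjecture is that one can further arrange for $R$ to meet $D$ in at most one point; the heuristic reason this should hold is that the rigidified marked points corresponding to $\tilde D$ ought to specialize onto non-moving ``skeletal'' components in the degeneration, leaving $R$ to carry only the attaching node as its intersection with $D$.

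\textbf{Main obstacle.} This last step is precisely where the argument breaks down in general, and is the reason the conjecture remains open. In Mori's classical non-log argument one only needs to extract a rational component at all, and the combinatorics of where marked points specialize is irrelevant. In the log setting nothing \emph{a priori} prevents the marked points from distributing across multiple components of the limit tree, so that the rational component $R$ ends up meeting $D$ in several distinct points. Controlling this distribution seems to require either a substantially refined bend-and-break in positive characteristic that exploits the log structure in an essential way, or new input coming from the boundary stratification of the moduli of stable log maps, or additional positivity/MMP information on $(X,D)$ (as in the partial results of Keel-McKernan in dimension two). Producing any such ingredient is where I would expect an attempt at the conjecture in full generality to stall.
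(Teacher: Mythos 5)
This statement is not a theorem of the paper at all: it is explicitly labeled a \emph{conjecture} (attributed to Keel--McKernan), and the paper offers no proof. The surrounding remark in the paper simply records that the conjecture is known in dimension two \cite[1.12]{KeelMcK}, and that Lu--Zhang and McQuillan--Pacienza have established it under extra hyperbolicity hypotheses on the strata of $D$; the author then explicitly sidesteps the conjecture by perturbing the boundary with an ample divisor $B$ (introducing the parameter $s$) so that pseudo-effectivity of $K_X+D+sB$ can be arranged directly. You have correctly recognized that no proof exists, and your sketch of where a naive logarithmic bend-and-break would stall --- the failure to control how the marked points corresponding to $C\cap D$ distribute among the components of the degenerate stable-log-map limit, so that the extracted rational component may meet $D$ in several points --- is an accurate and honest description of the known obstruction. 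Nothing to fix; just be aware that for the purposes of this paper the conjecture is quoted for motivation only, and the actual argument (Steps~1--5 of the proof of Theorem~B) is designed precisely to avoid relying on it.
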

 
The logarithmic bend and break is known in dimension two by \cite[1.12]{KeelMcK}.  Let us also mention that Lu-Zhang \cite[Thm.~1.4]{LZ} and McQuillan-Pacienza \cite[Rem.~1.1]{McQP} proved the above conjecture assuming that for any non-empty subset $ J \subset I$, any holomorphic map $f:\mathbb C \to \bigcap_{j\in J} D_i \backslash \bigcap_{k\notin J} D_k$ is constant. 
 \end{rema}

 \subsection{The main statement}
 Let $B$ be a smooth divisor on $X$ such that 
 \begin{enumerate}
 \item $B+D$ has simple normal crossings
 \item The line bundles associated to $B$ and $B-D$ are ample
 \item There exists $s_0 \in (0,\frac 12)$ such that $K_X+D+s_0B$ is pseudo-effective. 
 \end{enumerate}
Now, let $0 \le s< 1/2$ be any rational number such that the $\Q$-line bundle $K_X+D+sB$ is pseudoeffective. Up until the very end, the number $s$ will be fixed. By the assumptions on $B$ above, one knows that for any rational number $b>0$, the $\Q$-line bundle $K_X+D+sB+b(B-D)=K_X+(1-b)D+(b+s)B$ is big. Let 
$$\Db:=(1-b)D+(b+s)B$$ 
The pair $(X,\Db)$ is klt and is of log general type whenever $b\in (0, 1/2)$. By \cite{EGZ}, $(X,\Db)$ admits a unique Kähler-Einstein metric $\omkebs$. That is, $\omkebs$ is a closed, positive current in $c_1(K_X+\Db)$ with bounded potentials satisfying the Einstein equation 
$$\Ric \omkebs = -\omkebs +[\Db]$$
in the weak sense. Thanks to \cite{BCHM}, $\omkebs$ defines a smooth Kähler metric on the Zariski open set $\mathrm{Amp}(K_X+\Db)\backslash (D\cup B)$. 
 
Indeed, thanks to \textit{loc. cit.}, there exists a canonical model $(X_{{\rm can},b,s},\Delta_{{\rm can},b,s})$ of $(X,\Db)$ with klt singularities such that $K_{X_{{\rm can},b,s}}+\Delta_{{\rm can},b,s}$ is ample. Let us consider a resolution $Z$ of the graph of the birational map $\phi:X\dashrightarrow X_{{\rm can},b,s}$ as summarized in the diagram below
 $$
  \xymatrix{
    & \ar[ld]_{\mu} Z  \ar[rd]^{\nu}   &\\
      X  \ar@{-->}[rr]_{\phi} &  &   X_{{\rm can},b,s} 
  }
  $$
 Let $\Db':=(1-b)D'+(b+s)B'$ where $D'$ (resp. $B'$) is the strict transform of $D$ (resp. $B$) by $\mu$. There exist a $\nu$-exceptional $\mathbb Q$-divisor $E:=\sum_{j=0}^d a_j E_j$ with snc support and coefficients $a_j\in (-1,+\infty)$ such that 
   $$K_Z+\Db'=\nu^* (K_{X_{{\rm can},b,s}}+\Delta_{{\rm can},b,s})+E$$
Moreover, one can assume that $\mathrm{Exc}(\mu)$ is divisorial and that the support of $\Db'+E$ has simple normal crossings. Up to setting some $a_j$'s to zero, one can also assume that $\mathrm{Exc}(\mu) \subseteq \bigcup_{j=0}^d E_j$. Let us stress here that $\mu$ is an isomorphism over the Zariski open set $\mathrm{Amp}(K_X+\Db)$ given that $\phi$ is defined there and induces an isomorphism onto its image when restricted to that set. \\
  
  Let $A:=K_{X_{{\rm can},b,s}}+\Delta_{{\rm can},b,s}$ and let $\om_Z$ be a background Kähler metric on $Z$. For any $t\in [0,1]$, the cohomology class $c_1(\nu^*A+t\{\om_Z\})$ is semi-positive and big (it is even Kähler if $t>0$). Thus,  it follows from \cite{EGZ} that there exists a unique singular Kähler-Einstein metric $\om_t\in c_1(\nu^*A+t\{\om_Z\})$ solving 
  $$\Ric \omt = -\omt + t\om_Z+[\Db']-[E]$$
  The current $\omt$ is smooth outside $\Supp(\Db'+E)$ and, moreover, there exists an effective, $\mu$-exceptional $\Q$-divisor $F$ on $Z$ such that 
  \begin{equation}
  \label{keb}
  \mu^*\omkebs = \om_0+[F].
  \end{equation} 
  In particular, $\omkebs$ is smooth on $\mathrm{Amp}(K_X+\Db)\backslash (D\cup B)$. \\

\noindent
As in the earlier setting, the key point is the following volume estimate
\begin{theo}
\label{intcontrol2}
In the setting \ref{setting} above, given an ample line bundle $H$ on $X$, there exists a constant $C$ depending only on $X,D,H,\om$ \---but not $b$ or $s$\--- such that 
$$\int_{\mathrm{Amp}(K_X+\Db)\backslash (D\cup B)} \tr_{\omkebs}\om \, \cdotp  \omkebs^m\,  \le C \left( \la (K_X+\Db)^m\ra + b \la (K_X+\Db)^{m-1}\cdotp H\ra\right)$$
where $\la \cdotp \ra$ is the movable intersection product, cf \cite[\S 3]{BDPP} and the references therein. Furthermore, the line bundle $K_X+D$ is big.
\end{theo}

\subsection{Proof of Theorem~\ref{intcontrol2}}
The strategy of the proof is similar to that of Proposition~\ref{intcontrol}, but it gets more technical. We will only indicate what are the main changes to perform.\\

  \noindent
  \textbf{Step 1. }
  
  \noindent
 For $t, \ep>0$ we now instead consider the current $\omte=\nu^*\om_A+t\om_Z+dd^c \vpte \in c_1(\nu^*A+t\{\om_Z\})$ solving 
  
  \begin{equation}
  \label{MA3}
  (\nu^*\om_A+t\om_Z+dd^c \vpte)^m=\frac{\prod_{j=0}^d (|t_j|^2+\ep^2)^{a_j}} {|s_{D'}|^{2(1-b)}\cdotp |s_{B'}|^{2(b+s)}}\cdotp e^{\vpte} dV
  \end{equation}
  where $t_j,s_{D'},s_{B'}$ are respectively sections of $\mathcal O_Z(E_j),\mathcal O_Z(D'),\mathcal O_Z(B')$ cutting out $E_j,D',B'$ and the smooth hermitian metrics chosen on the various bundles are such that the following equation  holds.
  \begin{equation}
  \label{ric3}
  \Ric \omte = -\omte + t\om_Z+ [\Db']-\theta_{\ep}^E
  \end{equation}
  where
$\theta_{\ep}^E:=\sum_{j=0}^d a_j (\Theta_{h_{E_j}}+dd^c \log(|t_j|^2+\ep^2))$.  
  \smallskip
  
  \noindent
  In the following, one sets $\Zsd:=Z\backslash (D'\cup B')$. By the proof of \cite[Prop.~2.1]{GP}, $\omte$ is a Kähler metric on $\Zsd$ with conical singularities along $\Db'$, and it is uniformly (in $\ep$) dominated by a Kähler metric on $\Zsd \backslash \cup_{a_j<0} E_j$ with conic singularities along $\Db'+\sum_{a_j<0}(-a_j)E_j$. In particular, there exists $f\in L^1(\omz^m)$ independent of $\ep$ such that $$\omz \wedge \omte^{m-1} \le f\omz^m$$ By Lebesgue dominated convergence theorem, one gets  
   \begin{equation}
 \label{convint}
 \forall t>0, \forall j=0 \ldots d, \quad \lim_{\ep\to 0} \int_{\Zsd} \frac{\ep^2}{|s_{E_j}|^2+\ep^2} \, \om_Z \wedge \omte^{m-1} = 0.
 \end{equation}
  
  \noindent
 Finally, remember from Ahlfors-Schwarz lemma, cf Lemma~\ref{AS}, that the Kähler metric $\tom:=(\mu|_{\Zsd})^*\om$ on $\Zsd$ has at most Poincaré singularities along $D'+\sum E_j$. In particular, one has 
  \begin{equation}
  \label{borne}
  \sup_{\Zsd} \left[|s_{D'}|^{2b}\cdotp \prod_{j=0}^d |s_{E_j}|^2 \cdotp \tr_{\omte} \tom \right] <+\infty.
  \end{equation}


  \noindent
  \textbf{Step 2. } 
  
  \noindent
  The following Laplacian inequality holds on $\Zsd$
  \begin{equation}
 \label{RoydenQP}
\Delta_{\omte} \log \tr_{\omted}\tom \ge \kappa \cdotp \tr_{\omte}\tom-\tr_{\omte}( \theta_{\ep}^E + \chi_{\ep}\, \omz) -1
\end{equation}
where $\chi_{\ep} = C \sum_{j=0}^d \frac{\ep^2}{|s_{E_j}|^2+\ep^2}$ for some large $C$ independent of $\ep$. Moreover, one has
\begin{align*}
\Delta_{\omte}\left[\log|s_D'|^{2b}+\sum_{j=0}^d \log |s_{E_j}|^2 \right]& = \tr_{\omte}(dd^c (\log|s_D'|^{2b}+\sum_{j=0}^d \log |s_{E_j}|^2))\\
& \ge -b \cdot \tr_{\omte}\Theta_{D'}- \sum_{j=0}^d\tr_{\omte} \Theta_{E_j}
\end{align*}
where $\Theta_{D'}, \Theta_{E_j}$ are the Chern curvature form of the smooth hermitian metrics chosen on the respective associated line bundles. In the end, one gets the following identity, holding on $\Zsd$
\begin{equation}
 \label{RoydenQPP}
\Delta_{\omte} \big[ \log \big(|s_{D'}|^{2b}\cdotp \prod_{j=0}^d |s_{E_j}|^2 \cdotp \tr_{\omted}\tom \big) \big] \ge \kappa \cdotp \tr_{\omte}\tom-\tr_{\omte}\big( \theta_{\ep}^E + \chi_{\ep}\, \omz+b  \Theta_{D'}+ \sum_{j=0}^d \Theta_{E_j}\big)-1
\end{equation}

\noindent
\textbf{Step 3.}

\noindent
As before, one starts by choosing $\delta>0$ and deduce from \eqref{RoydenQP} the following
$$ \Delta_{\omte} \log (u+\delta) \ge  \kappa \cdotp \frac{u^2}{u+\delta}-v\cdotp \frac{u}{u+\delta}$$
where $u:=|s_{D'}|^{2b}\cdotp \prod_{j=0}^d |s_{E_j}|^2 \cdotp \tr_{\omted}\tom$ and $v=\tr_{\omte}\big( \theta_{\ep}^E + \chi_{\ep}\, \omz+b  \Theta_{D'}+ \sum_{j=0}^d \Theta_{E_j}\big)+1$. By the observation \eqref{borne} above, all the terms involved are smooth on $\Zsd$ and globally bounded. In particular, the dominated convergence theorem shows that 
\begin{equation}
\label{dc}
\int_{\Zsd} (\kappa u-v)\, \omte^m= \lim_{\delta \to 0 } \int_{\Zsd} \Big(\kappa \cdotp \frac{u^2}{u+\delta}-v\cdotp \frac{u}{u+\delta}\Big) \, \omte^m
\end{equation}
Combining \eqref{dc} with Lemma~\ref{ipp} below, one eventually gets
\begin{equation}
\label{inegalite}
\int_{\Zsd} \kappa \, \tr_{\omte}\tom\, \omte^m \le  \int_{\Zsd} \Big(\tr_{\omte}\big( \theta_{\ep}^E + \chi_{\ep}\, \omz+b  \Theta_{D'}+ \sum_{j=0}^d \Theta_{E_j}\big)+1\Big) \, \omte^m
\end{equation}

\begin{lemm}
\label{ipp}
Let $f,g \in  L^{\infty}(\Zsd) \cap \mathscr C^{\infty}(\Zsd)$ such that $$\Delta_{\om_c} f \ge g \quad \mbox{on } \, \Zsd,$$
where $\om_c$ is some Kähler metric with conic singularities along $\Db'$. Then 
$$\int_{\Zsd}g\, \om_c^m \le 0.$$
\end{lemm}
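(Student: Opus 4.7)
The plan is to run a Stokes-with-cutoff argument. Since $f$ is controlled only on $\Zsd$, I cannot integrate by parts directly on the full compact $Z$; instead, I will construct smooth cutoff functions $\chi_\eta\colon Z\to[0,1]$ supported in $\Zsd$, converging pointwise to $1$ on $\Zsd$ as $\eta\to 0$, with the decay
\[
\int_Z |\Delta_{\om_c}\chi_\eta|\,\om_c^m \;\longrightarrow\; 0.
\]
Granting such cutoffs, multiplying the pointwise inequality $\Delta_{\om_c}f\ge g$ by $\chi_\eta\ge 0$ and integrating on $\Zsd$ (where $\om_c$ is smooth), two applications of Stokes' formula yield
\begin{equation*}
\int_{\Zsd}\chi_\eta\,g\,\om_c^m \;\le\; \int_{\Zsd}\chi_\eta\,\Delta_{\om_c}f\,\om_c^m \;=\; \int_{\Zsd} f\,\Delta_{\om_c}\chi_\eta\,\om_c^m,
\end{equation*}
the rightmost integral being dominated by $\|f\|_\infty\cdot\int_Z|\Delta_{\om_c}\chi_\eta|\,\om_c^m\to 0$. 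On the left, dominated convergence applies thanks to $g\in L^\infty$ and $\int_Z\om_c^m<\infty$ (the latter because $\om_c$ has positive cone angles along $\Db'$, so its volume form is locally integrable), which yields $\int_{\Zsd}g\,\om_c^m\le 0$.

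To build the cutoffs, fix smooth Hermitian metrics on $\mathcal O_Z(D')$ and $\mathcal O_Z(B')$, set $\sigma:=|s_{D'}|^2|s_{B'}|^2$ and $\psi:=\log\sigma$, and pick a nondecreasing $\rho\in\mathscr C^\infty(\mathbb R,[0,1])$ with $\rho\equiv 0$ on $(-\infty,1]$ and $\rho\equiv 1$ on $[2,+\infty)$. For small $\eta>0$, $\chi_\eta:=\rho(\psi/\log\eta)$ has the desired cutoff properties, and the chain rule gives
\begin{equation*}
i\partial\bar\partial\chi_\eta \;=\; \frac{\rho''(\psi/\log\eta)}{(\log\eta)^2}\,i\partial\psi\wedge\bar\partial\psi \;+\; \frac{\rho'(\psi/\log\eta)}{\log\eta}\,i\partial\bar\partial\psi.
\end{equation*}
On the support of $\rho'$ and $\rho''$, which lies in $\{\eta^2\le\sigma\le\eta\}\subset\Zsd$, one has $i\partial\bar\partial\psi=-\Theta_{D'}-\Theta_{B'}$, a smooth bounded form on $Z$, whose mass against $\om_c^{m-1}$ is finite by cohomological reasons. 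Hence the second term contributes at most $O(1/|\log\eta|)$. For the first, matters reduce to the Dirichlet-type estimate
\begin{equation*}
\int_{\{\eta^2\le\sigma\le\eta\}} i\partial\psi\wedge\bar\partial\psi\wedge\om_c^{m-1} \;=\; O(|\log\eta|),
\end{equation*}
which combined with the $(\log\eta)^{-2}$ prefactor also yields $O(1/|\log\eta|)$, establishing the required decay.

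The main obstacle is this last Dirichlet energy estimate. It is verified by a direct computation in local conic coordinates: near a smooth point of $D'=\{z_1=0\}$, $\om_c$ is quasi-isometric to $|z_1|^{2(b-1)}\,idz_1\wedge d\bar z_1+\sum_{k\ge 2}idz_k\wedge d\bar z_k$, so that $|\partial\psi|^2_{\om_c}\asymp|z_1|^{-2b}$ and hence $i\partial\psi\wedge\bar\partial\psi\wedge\om_c^{m-1}\asymp|z_1|^{-2}\,dV_{\mathrm{flat}}$; integrating over the annulus $\eta\le|z_1|\le\sqrt\eta$ produces exactly $\tfrac12|\log\eta|$. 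The analogous calculations at smooth points of $B'$ and at crossings of $D'$ with $B'$ yield the same uniform bound by compactness of $Z$. This works precisely because the cone angles $1-b$ and $b+s$ both lie in $(0,1)$, which simultaneously guarantees local integrability of $\om_c^m$ and provides the decisive logarithmic gain from the cutoff.
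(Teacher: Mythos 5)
Your overall strategy — multiply by a cutoff supported in $\Zsd$, integrate by parts twice, and show that the ``boundary mass'' $\int_Z|\Delta_{\om_c}\chi_\eta|\,\om_c^m$ tends to zero — is exactly the mechanism behind the paper's proof. The paper obtains this by citing the standard Poincaré-type cutoffs $(\xi_\alpha)$ of \cite[Sect.~9]{CGP}, which satisfy $\limsup_\alpha \sup_Z|dd^c\xi_\alpha|_{\omp}<\infty$ with $d\xi_\alpha$ supported in a shrinking neighborhood of $D'\cup B'$; combined with the domination $\om_c\le C\omp$ this gives $m\int_Z f\,dd^c\xi_\alpha\wedge\om_c^{m-1}=O(\mathrm{Vol}_{\omp}(\Supp\,d\xi_\alpha))\to 0$. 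You instead construct a concrete cutoff via a rescaled logarithm and carry out the Dirichlet-energy estimate $\int_{\{\eta^2\le\sigma\le\eta\}}i\partial\psi\wedge\bar\partial\psi\wedge\om_c^{m-1}=O(|\log\eta|)$ directly in conic coordinates; this makes the argument self-contained and is, in effect, one explicit realization of the CGP cutoffs (note that your $\chi_\eta$ also has uniformly bounded $\omp$-Hessian, since $|i\partial\psi\wedge\bar\partial\psi|_{\omp}\sim\log^2\sigma\sim(\log\eta)^2$ on the transition annulus). Both routes are valid; yours trades a citation for a local computation.

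One genuine slip: with $\rho\equiv 0$ on $(-\infty,1]$, $\rho\equiv 1$ on $[2,\infty)$ and $\log\eta<0$, the function $\rho(\psi/\log\eta)$ equals $1$ on $\{\sigma\le\eta^2\}$ (near $D'\cup B'$) and $0$ on $\{\sigma\ge\eta\}$, so it converges to $0$ on $\Zsd$, not to $1$. You want $\chi_\eta:=1-\rho(\psi/\log\eta)$ (or flip the monotonicity of $\rho$); this changes only the sign of $dd^c\chi_\eta$, so all your absolute-value estimates carry over unchanged. Also, the finiteness of $\int_Z\bigl|(\Theta_{D'}+\Theta_{B'})\wedge\om_c^{m-1}\bigr|$ is better justified by the pointwise domination $|\Theta_{D'}+\Theta_{B'}|\le C\omp$ together with $\int_Z\omp\wedge\om_c^{m-1}<\infty$ (which follows from $\om_c\le C\omp$ and $\int_Z\omp^m<\infty$), rather than ``by cohomological reasons'': the intersection number only controls the signed integral, whereas you need the total variation.
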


\begin{proof}[Proof of Lemma~\ref{ipp}]
It is well-known that the complex codimension one set $D'\cup B' \subseteq Z$ admits a family of cut-off functions $(\xi_{\alpha})_{\alpha>0}$ such that $$\limsup_{\alpha \to 0} \sup_Z |dd^c \xi_{\alpha}|_{\omp} <+\infty$$ where $\omp$ is a metric with Poincaré singularities along $D'+B'$, cf e.g. \cite[Sect.~9]{CGP}. 

\noindent
By assumption, the function $g$ is integrable with respect to $\om_c^m$ and by dominated convergence, one has 
$$\int_{\Zsd} g \, \om_c^n = \lim_{\alpha \to 0} \int_Z \xi_{\alpha}g \, \om_c^m$$
But that last integral is dominated by 
\begin{align*}
\int_Z \xi_{\alpha} \cdotp  \Delta_{\om_c} f \, \om_c^{m}&= m\cdotp \int_Z f dd^c \xi_{\alpha}\wedge \om_c^{m-1} \\
& \le C\cdotp \sup_Z |f| \cdotp \mathrm{Vol}_{\omp}(\Supp(\xi_{\alpha}))
\end{align*}
where $C$ is such $m\, dd^c \xi_{\alpha}\wedge \om_c^{m-1} \le C \omp^m$. Finally, the right-hand side tends to zero when $\alpha$ approaches zero. The Lemma is proved. 
\end{proof}

 \bigskip 
 
\noindent
\textbf{Step 4. }

\noindent 
The right-hand side of \eqref{inegalite} can be rewritten as
\begin{equation}
\label{rhs}
m\int_{Z}(\theta^E_{\ep} +b  \Theta_{D'}+ \sum_{j=0}^d \Theta_{E_j} )\wedge \omte^{m-1}+m\int_Z\chi_{\ep}\, \omz\wedge \omte^{m-1}+\{\nu^*\om_A+t\om_Z\}^m
\end{equation}
The first term is cohomological and coincides with $m\, \big((E+\sum E_j+bD')\cdotp (\{\nu^*\om_A+t\om_Z\})^{m-1}\big)$, which is independent of $\ep$. For the second, one has the limit computation \eqref{convint}. As $\sum E_j$ is $\nu$-exceptional, one gets
 \begin{equation}
 \label{sin}
 \limsup_{t\to 0} \limsup_{\ep \to 0}   \int_{\Zsd}  \kappa \cdotp \tr_{\omte}\tom \,\cdotp \omte^m \le mb(D'\cdotp (\nu^*A)^{m-1}) +\la(K_X+\Db)^m\ra
 \end{equation} 
 Finally, let $p>0$ such that $pH-D$ is effective. Then, one has  
 \begin{align*}
 D'\cdotp (\nu^*A)^{m-1}& \le(\mu^*D \cdotp (\nu^*A)^{m-1}) \\
 &\le p\,(\mu^*H\cdotp (\nu^*A)^{m-1})\\
 &= p\,\la H \cdotp (K_X+\Db)^{m-1} \ra 
\end{align*} 
which ends the proof of the first part of Theorem~\ref{intcontrol2}. 

 \bigskip 
 
\noindent
\textbf{Step 5. Bigness of $K_X+D$.}

\noindent 
As $\om$ is dominated by a metric with Poincaré singularities along $D$, Skoda-El Mir extension theorem implies that the current $\om$ on $\X$ can be extended to a closed, positive $(1,1)$-current on $X$ putting no mass on $D$. We still denote it by $\om$, and set $\alpha:=\{\om\}$; this is a pseudoeffective class. As $\om$ has no zero Lelong numbers, Demailly's regularization theorem shows that $\alpha$ is even nef, but we will not use this fact. We claim that for any $t>0$, one has
\begin{equation}
\label{mass}
\int_{\Zsd}  \tr_{\omte}\tom \,\cdotp \omte^m = m (\{\omte^{m-1}\}\cdotp \mu^*\alpha  )
\end{equation}
Indeed, the integral on the left-hand side can be rewritten as $m\int_{Z}  \tom \wedge \omte^{m-1}$ given that $\tom$ has at most Poincaré singularities. Moreover, for any $t,\ep>0$, the metric $\omte$ has conic singularities along $\Db'$ and can be regularized into a family of smooth Kähler metrics $(\om_{t,\ep, \delta})_{\delta>0}$ in the same cohomology class $\{\omte\}$ such that $\om_{t,\ep,\delta} \le C_{t,\ep} \omte$ for some $C_{t,\ep}>0$ independent of $\delta$. By Lebesgue dominated convergence theorem, one deduces that $$\int_{Z}  \tom \wedge \omte^{m-1} = \lim_{\delta \to 0} \int_Z \tom \wedge \omted^{m-1}. $$
Now, the total mass on $Z$ of a closed, positive $(1,1)$-current with respect to a given Kähler metric only depends on the cohomology class of that current. From the identity above, one deduces
 $$\int_{Z}  \tom \wedge \omte^{m-1} =\{\omte^{m-1}\} \cdotp \{\tom\}$$
 which prove \eqref{mass}.\\

%


When $t,\ep$ approach zero, the right-hand side of \eqref{mass} converges to $m(  (\nu^*A)^{m-1}\cdotp\mu^*\alpha)$ which coincides with the movable intersection product $m\la (K_X+\Db)^{m-1} \cdotp \alpha \ra$. As a result, one obtains 
$$\la (K_X+\Db)^{m-1} \cdotp \alpha \ra \le \frac{bp}{\kappa}\la  (K_X+\Db)^{m-1} \cdotp H\ra+\frac 1{\kappa m} \la(K_X+\Db)^m\ra$$

\noindent
Let us now try to analyze the class $\alpha$. Because $\om$ is smooth and Kähler on a Zariski open set, $\alpha$ is big thanks to \cite{Bou02}. In particular, for $b$ small enough, one has an inequality of $(1,1)$ cohomology classes $$\alpha-\frac{bp}{\kappa} H \ge \frac 12 \alpha.$$ By the increasing and superadditive properties of the movable intersection \cite[Thm.~3.5 (ii)]{BDPP}, one has
$$\la (K_X+\Db)^{m-1} \cdotp \alpha \ra - \frac{bp}{\kappa}\la  (K_X+\Db)^{m-1} \cdotp H\ra \ge \frac 12 \la (K_X+\Db)^{m-1} \cdotp \alpha \ra$$
and therefore, using the Teissier-Hovanskii inequalities \cite[Thm.~3.5 (iii)]{BDPP}, one gets
\begin{align*}
\la(K_X+\Db)^m\ra &\ge \frac {\kappa m}2 \la (K_X+\Db)^{m-1} \cdotp \alpha \ra \\
& \ge \frac {\kappa m} 2  \la (K_X+\Db)^m\ra^{1-1/m}\cdotp \la\alpha^{m}\ra^{1/m} 
\end{align*}
or equivalently
$$\la(K_X+\Db)^m\ra \ge \Big(\frac {\kappa m}2 \Big)^m \cdot \la \alpha^m \ra$$
and the right-hand side is positive, independent of both $b$ and $s$. In conclusion, one gets 
\begin{equation}
\label{volu}
\mathrm{vol}(K_X+D+sB)=\lim_{b\to 0} \mathrm{vol}(K_X+\Db)\ge \Big(\frac {\kappa m}2 \Big)^m \cdot \la \alpha^m \ra.
\end{equation}
The inequality above holds for any rational number $s\ge 0$ such that $K_X+D+sB$ is pseudoeffective. If we can show that $K_X+D$ is pseudo-effective, then we are done as \eqref{volu} would show that $K_X+D$ is big. But if $K_X+D$ is not pseudoeffective, there exists a real number $s_{\infty}>0$ such that $K_X+D+s_{\infty}B$ is pseudoeffective but not big. Taking a sequence of rational numbers $(s_n)$ decreasing to $s_{\infty}$, one has that $K_X+D+s_nB$ is big with  $\mathrm{vol}(K_X+D+s_nB)\ge \Big(\frac {\kappa m}2 \Big)^m \cdot \la \alpha^m \ra$. By continuity of the volume function, one gets $\mathrm{vol}(K_X+D+s_{\infty}B)>0$ which is a contradiction.

\bigskip
\noindent
\textbf{Step 6. The case where $\om$ is bounded.}
\label{Step6}

\noindent 
Here the pseudoeffectivity of $K_X$ comes almost for free by the exact same argument as the one in the first step of the proof of Corollary~\ref{coro} (p.~\pageref{pcoro}) by setting $\Omega:=X\backslash D$. 

From there, one can reproduce almost \textit{verbatim} the arguments of the proof of Theorem A. The only difference is in Step 3. as the quantity $\tr_{\omte} \om$ is no longer smooth across $D$ but merely bounded. However, the integration by parts technique of Lemma~\ref{ipp} still applies as the family $(\xi_{\alpha}$ of cut-off functions satisfies $\pm dd^c \xi_{\alpha} \wedge \om_{\rm sm}^{m-1} \le \omp^m$ where  $\om_{\rm sm}$ is a smooth Kähler form on $X$ and $\omp$ is some Kähler form on $X\backslash D$ with Poincaré singularities along $D$. In particular, $\int_X |\Delta_{\om_{\rm sm}} \xi_{\alpha}| \, \om_{\rm sm}^m$ converges to $0$ as $\alpha$ approaches zero.

\backmatter

\bibliographystyle{smfalpha}
\bibliography{biblio}

\providecommand{\bysame}{\leavevmode ---\ }
\providecommand{\og}{``}
\providecommand{\fg}{''}
\providecommand{\smfandname}{\&}
\providecommand{\smfedsname}{\'eds.}
\providecommand{\smfedname}{\'ed.}
\providecommand{\smfmastersthesisname}{M\'emoire}
\providecommand{\smfphdthesisname}{Th\`ese}
\begin{thebibliography}{BCHM10}

\bibitem[Aub78]{Aubin}
{\scshape T.~Aubin} -- {\og \'{E}quations du type {M}onge-{A}mp\`ere sur les
  vari\'et\'es k\"ahl\'eriennes compactes\fg}, \emph{Bull. Sci. Math. (2)}
  \textbf{102} (1978), no.~1, p.~63--95.

\bibitem[BBP13]{BBP}
{\scshape S.~Boucksom, A.~Broustet {\normalfont \smfandname} G.~Pacienza} --
  {\og Uniruledness of stable base loci of adjoint linear systems via {M}ori
  theory\fg}, \emph{Math. Z.} \textbf{275} (2013), no.~1-2, p.~499--507.

\bibitem[BCHM10]{BCHM}
{\scshape C.~Birkar, P.~Cascini, C.~Hacon {\normalfont \smfandname}
  J.~McKernan} -- {\og {Existence of minimal models for varieties of log
  general type}\fg}, \emph{J. Amer. Math. Soc.} \textbf{23} (2010),
  p.~405--468.

\bibitem[BDPP13]{BDPP}
{\scshape S.~Boucksom, J.-P. Demailly, M.~P{\u a}un {\normalfont \smfandname}
  T.~Peternell} -- {\og The pseudo-effective cone of a compact {K}\"ahler
  manifold and varieties of negative {K}odaira dimension\fg}, \emph{J.
  Algebraic Geom.} \textbf{22} (2013), no.~2, p.~201--248.

\bibitem[BEGZ10]{BEGZ}
{\scshape S.~Boucksom, P.~Eyssidieux, V.~Guedj {\normalfont \smfandname}
  A.~Zeriahi} -- {\og {Monge-Amp{\`e}re equations in big cohomology
  classes.}\fg}, \emph{Acta Math.} \textbf{205} (2010), no.~2, p.~199--262.

\bibitem[Bou02]{Bou02}
{\scshape S.~Boucksom} -- {\og {On the volume of a line bundle.}\fg},
  \emph{Int. J. Math.} \textbf{13} (2002), no.~10, p.~1043--1063.

\bibitem[Cad16]{Cadorel}
{\scshape B.~Cadorel} -- {\og {Symmetric differentials on complex hyperbolic
  manifolds with cusps}\fg}, Preprint
  \href{https://arxiv.org/abs/1606.05470}{arXiv:1606.05470}, 2016.

\bibitem[CGP13]{CGP}
{\scshape F.~Campana, H.~Guenancia {\normalfont \smfandname} M.~P\u{a}un} --
  {\og {Metrics with cone singularities along normal crossing divisors and
  holomorphic tensor fields}\fg}, \emph{Ann. Scient. Éc. Norm. Sup.}
  \textbf{46} (2013), p.~879--916.

\bibitem[CP15]{CP}
{\scshape F.~Campana {\normalfont \smfandname} M.~P{\u a}un} -- {\og Orbifold
  generic semi-positivity: an application to families of canonically polarized
  manifolds\fg}, \emph{Ann. Inst. Fourier (Grenoble)} \textbf{65} (2015),
  no.~2, p.~835--861.

\bibitem[DT16]{DT16}
{\scshape S.~Diverio {\normalfont \smfandname} S.~Trapani} -- {\og
  Quasi-negative holomorphic sectional curvature and positivity of the
  canonical bundle\fg}, Preprint
  \href{http://arxiv.org/abs/1606.01381}{arXiv:1606.01381}, to appear in J.
  Differential Geom., 2016.

\bibitem[EGZ09]{EGZ}
{\scshape P.~Eyssidieux, V.~Guedj {\normalfont \smfandname} A.~Zeriahi} -- {\og
  {Singular K{\"a}hler-Einstein metrics}\fg}, \emph{{J. Amer. Math. Soc.}}
  \textbf{22} (2009), p.~607--639.

\bibitem[GP16]{GP}
{\scshape H.~Guenancia {\normalfont \smfandname} M.~P{\u{a}}un} -- {\og {Conic
  singularities metrics with prescribed Ricci curvature: the case of general
  cone angles along normal crossing divisors}\fg}, \emph{J. Differential Geom.}
  \textbf{103} (2016), no.~1, p.~15--57.

\bibitem[Gue13]{G2}
{\scshape H.~Guenancia} -- {\og K\"ahler-{E}instein metrics with cone
  singularities on klt pairs\fg}, \emph{Internat. J. Math.} \textbf{24} (2013),
  no.~5, p.~1350035, 19.

\bibitem[KM98]{KM}
{\scshape J.~Koll{\'a}r {\normalfont \smfandname} S.~Mori} -- \emph{Birational
  geometry of algebraic varieties}, Cambridge Tracts in Mathematics, vol. 134,
  Cambridge University Press, Cambridge, 1998, With the collaboration of C. H.
  Clemens and A. Corti, Translated from the 1998 Japanese original.

\bibitem[KM99]{KeelMcK}
{\scshape S.~Keel {\normalfont \smfandname} J.~McKernan} -- {\og Rational
  curves on quasi-projective surfaces\fg}, \emph{Mem. Amer. Math. Soc.}
  \textbf{140} (1999), no.~669, p.~viii+153.

\bibitem[Laz04]{PAG1}
{\scshape R.~Lazarsfeld} -- \emph{Positivity in algebraic geometry. {I}},
  Ergebnisse der Mathematik und ihrer Grenzgebiete. 3. Folge. A Series of
  Modern Surveys in Mathematics [Results in Mathematics and Related Areas. 3rd
  Series. A Series of Modern Surveys in Mathematics], vol.~48, Springer-Verlag,
  Berlin, 2004, Classical setting: line bundles and linear series.

\bibitem[LZ17]{LZ}
{\scshape S.~S.~Y. Lu {\normalfont \smfandname} D.-Q. Zhang} -- {\og Positivity
  criteria for log canonical divisors and hyperbolicity\fg}, \emph{J. Reine
  Angew. Math.} \textbf{726} (2017), p.~173--186.

\bibitem[MP12]{McQP}
{\scshape M.~McQuillan {\normalfont \smfandname} G.~Pacienza} -- {\og {Remarks
  about bubbles}\fg}, Preprint
  \href{https://arxiv.org/pdf/1211.0203}{arXiv:1211.0203}, 2012.

\bibitem[Roy80]{Royden}
{\scshape H.~L. Royden} -- {\og The {A}hlfors-{S}chwarz lemma in several
  complex variables\fg}, \emph{Comment. Math. Helv.} \textbf{55} (1980), no.~4,
  p.~547--558.

\bibitem[TY17]{TosYang}
{\scshape V.~Tosatti {\normalfont \smfandname} X.~Yang} -- {\og An extension of
  a theorem of {W}u-{Y}au\fg}, \emph{J. Differential Geom.} \textbf{107}
  (2017), no.~3, p.~573--579.

\bibitem[WY16]{WY16}
{\scshape D.~Wu {\normalfont \smfandname} S.-T. Yau} -- {\og Negative
  holomorphic curvature and positive canonical bundle\fg}, \emph{Invent. Math.}
  \textbf{204} (2016), no.~2, p.~595--604.

\bibitem[Yau78]{Yau78}
{\scshape S.-T. Yau} -- {\og {On the Ricci curvature of a compact K{\"a}hler
  manifold and the complex Monge-Amp{\`e}re equation. I.}\fg}, \emph{Commun.
  Pure Appl. Math.} \textbf{31} (1978), p.~339--411.

\end{thebibliography}

\end{document}